    \tikzstyle{every picture}+=[remember picture]
    \setlist{nosep}
\newcolumntype{L}[1]{>{\raggedright\let\newline\\
                        \arraybackslash\hspace{0pt}}m{#1}}
\newcolumntype{C}[1]{>{\centering\let\newline\\
                        \arraybackslash\hspace{0pt}}m{#1}}
\newcolumntype{R}[1]{>{\raggedleft\let\newline\\
                        \arraybackslash\hspace{0pt}}m{#1}}
\newtheorem{theorem}{Theorem}[section]
\newtheorem{lemma}[theorem]{Lemma}
\newtheorem{thm}{Theorem}[section]
\newtheorem{prop}[thm]{Proposition}
\newtheorem{defi}[thm]{Definition}
\newtheorem{cor}[thm]{Corollary}
\newtheorem{meth}[thm]{Method}
\theoremstyle{definition}
\newtheorem{example}[theorem]{Example}
\theoremstyle{remark}
\newtheorem{remark}[theorem]{Remark}
\newcommand*{\bigcdot}{}
\DeclareRobustCommand*{\bigcdot}{%
  \mathbin{\mathpalette\bigcdot@{}}%
}
\newcommand*{\bigcdot@scalefactor}{.7}
\newcommand*{\bigcdot@widthfactor}{1.15}
\newcommand*{\bigcdot@}[2]{%
  \sbox0{$#1\vcenter{}$}
  \sbox2{$#1\cdot\m@th$}%
  \hbox to \bigcdot@widthfactor\wd2{%
    \hfil
    \raise\ht0\hbox{%
      \scalebox{\bigcdot@scalefactor}{%
        \lower\ht0\hbox{$#1\bullet\m@th$}%
      }%
    }%
    \hfil
  }%
}
\newtcbox{\mybox}[1][blue]{  on line,
                            arc=0pt,
                            outer arc=0pt,
                            colback=#1!10!white,
                            colframe=#1!50!black,
                            boxsep=0pt,
                            left=0pt,
                            right=0pt,
                            top=0.5pt,
                            bottom=0.5pt,
                            boxrule=0pt,
                            }
\newcommand{\R}{\mathbb{R}}
\newcommand{\N}{\mathbb{N}}
\newcommand{\tE}{\tilde{\mathbb{E}}}
\newcommand{\id}{\,\mathrm{d}}
\newcommand{\der}[0]{\mathrm{d}}
\newcommand{\E}[1]{\mathbb{E}\left[ #1  \right]}
\newcommand{\Pas}{$\mathbb{P}$-a.s.}
\newcommand{\vecdiv}[0]{\operatorname{\overrightarrow{\operatorname{div}}}}
\newcommand{\grad}[0]{\operatorname{grad}}
\DeclareMathOperator*{\Tr}{Tr}
\DeclareMathOperator{\Hess}{Hess}
\DeclareMathOperator{\diver}{div}
\title{An application of the splitting-up method for the computation of a neural network representation for the solution for the filtering equations\footnote{Submitted to \emph{Stochastics and Partial Differential Equations: Analysis and Computations},
special volume on the occasion of Istvan Gyongy's 70th birthday.}}
\author{Dan Crisan$^{1}$, 
        Alexander Lobbe$^{1,2}$, 
        Salvador Ortiz-Latorre$^2$
        \\
        \\
        \footnotesize{$^1$Department of Mathematics,
        Imperial College London, UK}\\ 
        \footnotesize{\href{mailto:d.crisan@imperial.ac.uk}{d.crisan@imperial.ac.uk}}\\
        \footnotesize{\href{mailto:alex.lobbe@imperial.ac.uk}{alex.lobbe@imperial.ac.uk}}\\[0.5cm]
        \footnotesize{$^2$Department of Mathematics,
        University of Oslo, Norway}\\ 
        \footnotesize{\href{mailto:salvadoo@math.uio.no}{salvadoo@math.uio.no}} 
    }
\date{}
\newdimen\figrasterwd
\begin{document}

\captionsetup{width=.9\linewidth}

\maketitle

\begin{abstract}
    \noindent
    The filtering equations govern the evolution of the conditional distribution of a signal process given partial, and possibly noisy, observations arriving sequentially in time.
    Their numerical approximation plays a central role in many real-life applications, including numerical weather prediction \cite{Llopis_filt_nwp, Galanis20062451}, finance \cite{brigo_filt_fin, Date_filt_fin}\cite[Part IX]{handbook} and engineering \cite{filt_eng}.
    One of the classical approaches to approximate the solution of the filtering equations is to use a PDE inspired method, called the splitting-up method, initiated by Gyongy, Krylov, LeGland, among other contributors, see e.g., \cite{gyongy_03, legland_92}.
    This method, and other PDE based approaches, have particular applicability for solving low-dimensional problems.
    In this work we combine this method with a neural network representation inspired by \cite{Han8505}. The new methodology is used to produce an approximation of the unnormalised conditional distribution of the signal process. We further develop a recursive normalisation procedure to recover the normalised conditional distribution of the signal process. The new scheme can be iterated over multiple time steps whilst keeping its asymptotic unbiasedness property intact.
    We test the neural network approximations with numerical approximation results for the Kalman and Benes filter.
\end{abstract}
\newpage
\hrulefill
\vspace{-0.5em}
{\footnotesize
\tableofcontents
}
\hrulefill\vspace{-1.5em}
\begin{flushright}    
{\footnotesize Date: \today}
\end{flushright}\vspace{-2em}
\section{Introduction}
This paper is concerned with the numerical approximation of the solution of the stochastic filtering equations.
In addition to its theoretical significance in stochastic analysis and control (see, for example, \cite{bain_crisan_2008}, \cite{handbook} or \cite{bensoussan_book}), stochastic filtering is an important modelling framework for many domains of application, such as numerical weather prediction \cite{Llopis_filt_nwp, Galanis20062451}, finance \cite{brigo_filt_fin, Date_filt_fin}\cite[Part IX]{handbook} and engineering \cite{filt_eng}.
Hence, there is a high demand for efficient and accurate numerical methods to approximate the solution of the filtering problem, i.e. the solution of the filtering equations.
Here, we are presenting a first study in an ongoing effort to combine a machine learning approach, that has risen in prominence within the numerical community over the past years, with the classical PDE based approach to the numerical resolution of the stochastic filtering problem.
In particular, we base our algorithm on the SPDE splitting method that was, among others, developed by Istvan Gyongy, Nikolay Krylov and Francois LeGland \cite{gyongy_03, legland_92}. The chosen neural network based machine learning approach for the approximation of the involved deterministic PDE is inspired by \cite{Han8505}.

Among all contributors, Istvan Gyongy has made the most fundamental contribution to the development of the splitting-up method as applied to the filtering equation and beyond. In the following we give some brief details of his contribution to the topic. The first of Gyongy's works in this direction was 
published in 2002 \cite{gyongy_spdeappl} where he presented numerical results for the approximation of stochastic PDEs with a particular focus on the the splitting-up method. Soon after, he published the paper \cite{gyongy_2003} with Nikolay Krylov, in the Annals of Probability. In this work, he investigates the convergence rates of the splitting method for various different classes of stochastic PDEs. Furthermore, in the final part of the paper he explicitly treats the application of these results in the context of stochastic filtering. In another work with Krylov in 2003, Gyongy proved convergence rates in Sobolev norm for the splitting-up method. Notably, this result is proved for the general case of time-dependent coefficients of the considered classes of SPDEs and the rates are even shown to be sharp. A short while later, another work of Gyongy, coauthored by Krylov, appeared in the year 2005 \cite{gyongy_split_extrap}. In this innovative paper, Gyongy devised a theoretical method for the splitting-up approximation of parabolic equations by constructing high order splitting-up methods out of low order ones by means of Richardson extrapolation.

The paper is structured as follows:
In subsection~\ref{sec:notation} we introduce the notation in the paper. Thereafter, in subsection~\ref{sec:filtering-problem}, we present the stochastic filtering problem at the level of generality appropriate for the purposes of this work. Notably, Proposition~\ref{prop:KS} presents the well-known Kallianpur-Striebel formula which establishes the distinction between what we call, respectively, the normalised and unnormalised filter. Subsequently, in subsection~\ref{sec:F_eq}, we discuss the filtering equations and recall the splitting-up method as we will apply it to the stochastic filtering equations. Based on the SPDE for the unnormalised filter, sometimes referred to as Zakai's equation, we apply the splitting method to decompose the SPDE into the deterministic PDE part and a normalisation, or data-assimilation, step.
The first step is commonly solved numerically by using Galerkin methods or similar grid-based approximation schemes. This approach is best applied in low-dimensional settings, due to the computational cost introduced by the discretisation. The second step is to construct the (approximate) likelihood based on the observation and to finally normalise the product of the likelihood function and the PDE solution such that it integrates to unity.

Next, in section~\ref{sec:representatio}, we analyse the case when the coefficient functions of the differential operator in the deterministic PDE that arises from the splitting-up method has smooth coefficients. The consequence of this assumption is that the operator can be split into a diffusion operator and a zero-order part. An elementary but crucial part of our argument is then given in Lemma~\ref{prop:diffusion} which establishes the fact that the diffusion operator arising from the PDE operator with smooth coefficients generates a stochastic diffusion process, which we will later call \emph{auxiliary diffusion}. Another central ingredient in the derivation of our method is the Feynman-Kac formula, given in Theorem~\ref{thm:finalValuePDE} for final-value PDEs. As we are presented with an initial-value problem, we will need the Feynman-Kac formula in a form that applies to such kind of PDEs. This is given in Corollary~\ref{thm:initialValuePDE}. The significance of the Feynman-Kac formula and the auxiliary diffusion derived in section~\ref{sec:representatio} lies in the fact that the solution to the deterministic PDE problem can then be written as a conditional expectation with respect to the law of the auxiliary diffusion given its initial value. We then give two examples of explicit representations of solutions to the particular filtering problems of the Kalman (linear) filter and the Benes filter in terms of the Feynman-Kac representation. In subsection~\ref{sec:optim} we prove Proposition~\ref{thm:minimizer} based on arguments presented in \cite{Han8505} and thus show that the solution of the PDE over a full hypercube-domain is represented by an infinite-dimensional optimisation of an objective function given by the Feynman-Kac formula.

Section~\ref{sec:DL_method} is dedicated to the detailed description of our computational method. In subsection~\ref{sec:NN_model} we introduce some terminology on deep learning, and specify how a parametrised neural network representation of the solution of the deterministic PDE is approximated through a Monte-Carlo sampling-based minimisation of the objective function given by the Feynman-Kac formula and the minimisation problem derived before.
In practise, the infinite-dimensional function space over which we theoretically minimise is parametrised by the neural network parameters to make it computationally tractable. This enables us to use generic methods for the computational optimisation, provided we are able to sample from the auxiliary diffusion process.
Thereafter, in subsection~\ref{sec:mc_correction} we describe the second part of the splitting method where we rely on the Monte-Carlo approximation
of the product of the neural network and the likelihood function to obtain the necessary normalisation constant.
Subsequently, in subsection~\ref{sec:algorithm} we describe the neural network representation and the chosen optimisation algorithm mathematically which results in a full specification of our method in terms of pseudocode. In particular, the algorithm may be iterated over several time steps whilst remaining asymptotically unbiased.

The numerical results obtained for the Kalman and Benes filters are presented in section~\ref{sec:num_res}. In our one-dimensional examples, we observed that the method can successfully be iterated over several time-steps. To the best of our knowledge, we present the first numerical results showing that the sampling based neural network representation of the solution to the Fokker-Planck equation may be iterated while remaining accurate with respect to the exact solution of the filtering problem. In fact, the filtering framework is ideally suited for this kind of study, because of its inherently sequential nature. Moreover, we identify the choice of the domain as a crucial factor for the success of our approximation. Due to the normalisation procedure which uses samples from the likelihood, we need to have a good signal-to-noise ratio in order to obtain a large proportion of samples within our considered domain. If this is not the case, the method diverges. Our study of the nonlinear Benes filter shows that the method is able to handle also nonlinear dynamics. 

In conclusion, based on the limited testing performed in this study, we believe that the use of neural network based representations in the numerical approximation of the stochastic filtering problem can be a viable alternative to existing numerical methods. Nevertheless, we emphasize the following two important caveats. First, the mathematical analysis of deep learning algorithms such as the one we employed here is not advanced enough to guarantee explicit convergence rates which might be undesirable in certain settings. Secondly, more numerical studies have to be performed to accurately evaluate the capabilities of neural networks in situations of higher practical relevance than the synthetic study we have performed in this work.
We plan to investigate this topic further in future work.

\subsection{Notation}\label{sec:notation}
Throughout this paper, $\mathbb{N}$ denotes the natural numbers without zero
and $\mathbb{N}_0$ is the set of natural numbers including zero.
The real numbers are denoted by $\R$ and, given $n\in \mathbb{N}$,
$\R^n$ is $n$-dimensional Euclidean space.
For $m,n\in\mathbb{N}$ the set of
$m\times n$-matrices with real entries is denoted by $\R^{m\times n}$.
For a given matrix $M\in\R^{m\times n}$, $M^\prime$ denotes its transpose
and $\Tr(M)$ denotes its trace.
For $k\in \mathbb{N}_0\cup\{\infty\}$ and separable normed
$\R$-vector spaces $A$ and $B$ we denote by $C^k(A,B)$ the set of $k$-times
continuously differentiable functions from $A\rightarrow B$. Moreover,
we use the shorthand $C^k(A)$ whenever $A=B$ and always identify 
$C^0(A,B)=C(A,B)$. 
Similarly, the spaces of 
$k$-times continuously differentiable functions with compact support
are denoted by $C_c^k(A,B)$ and the ones of bounded functions
with bounded derivatives of all orders by $C_b^k(A,B)$.
For an interval $I\subset\R$ and $d\in\N$ we write $C_b^{1,2}(I\times\R^d,\R)$ for the set
of bounded functions $f:I\times\R^d\rightarrow\R$ that are once continuously 
differentiable with bounded derivative in the first variable and twice 
continuously differentiable with bounded derivative in the second variable.
For a topological space $(\mathcal{T},\mathfrak{T})$,
$\mathcal{B(\mathcal{T})}$ is the Borel sigma-algebra on $\mathcal{T}$.
Further, if $(\mathcal{T}_0,\mathfrak{T}_0)$ is another topological space,
then $B(\mathcal{T}, \mathcal{T}_0)$ denotes the set of
bounded Borel-measurable functions from $\mathcal{T} \rightarrow \mathcal{T}_0$.
For a measurable space $(M,\mathfrak{M})$ we write $\mathcal{P}(M)$ for the
set of probability measures on $(M,\mathfrak{M})$ and $\mathcal{M}(M)$ for the
set of all measures on $(M,\mathfrak{M})$.
For $d\in\N$ and $a \in C^1(\R^d, \R^{d\times d})$ we write
\begin{equation*}
    \vecdiv(a)= \left( \sum_{i=1}^d \partial_i a_{ij} \right)_{j=1}^d.
\end{equation*}
Moreover, when $f\in C^1(\R^d,\R)$ we denote the gradient of $f$ by $\grad f$
and the divergence of $f$ by $\diver f$.
When $g\in C^2(\R^d,\R)$ we denote the Hessian of $g$ by $\Hess g$.

\subsection{Stochastic filtering problem}\label{sec:filtering-problem}
In this section we are following Bain and Crisan \cite{bain_crisan_2008}.
Let $(\Omega, \mathcal{F}, \mathbb{P})$ be a probability space with a normal
filtration\footnotemark $\,(\mathcal{F}_t)_{t\geq0}$.
\footnotetext{
    We call the filtration $(\mathcal{F}_t)_{t\geq0}$ \emph{normal}, if 
\begin{itemize}
    \item[--] $\mathcal{F}_0$ contains all $\mathbb{P}$-nullsets of 
    $\mathcal{F}$, and
    \item[--] $(\mathcal{F}_t)_{t\geq0}$ is right-continuous.
\end{itemize}}
Let $d,p\in\N$ and let $X: [0,\infty) \times \Omega \rightarrow \R^d$ be a
$d$-dimensional stochastic process satisfying, for all $t\in[0,\infty)$ 
and \Pas, that
\begin{equation}\label{eq:signal}
    X_t = X_0 + \int_0^t f(X_s) \id s + \int_0^t \sigma(X_s) \id V_s \;,
\end{equation}
where
$f: \R^d \rightarrow \R^d$ and
$\sigma: \R^d \rightarrow \R^{d \times p}$
are globally Lipschitz continuous functions and
$V: [0,\infty) \times \Omega \rightarrow \R^p$ is a $p$-dimensional 
$(\mathcal{F}_t)_{t\geq0}$-adapted Brownian motion.
Then $X$ admits the infinitesimal generator
$A: \mathcal{D}(A) \rightarrow B(\R^{d})$ given,
for all $\varphi \in \mathcal{D}(A)$, by
\begin{equation}\label{eq:A}
    A\varphi = \langle f, \nabla \varphi \rangle + \Tr(a\Hess\varphi),
\end{equation}
where $\mathcal{D}(A)$ denotes the domain of the differential operator $A$
and where we defined the function
$a(\cdot) = \frac{1}{2}\sigma(\cdot)\sigma^\prime(\cdot) : 
\R^d \rightarrow \R^{d \times d}$.
We assume from now on that a dense core for the domain $\mathcal{D}(A)$ is $C_c^2(\R^d)$.

In the context of stochastic filtering, $X$ is called the 
\emph{signal process}.
Further, we assume the \emph{observation process}
$Y: [0,\infty) \times \Omega \rightarrow \R^{m}$ to be given, 
for all $t\in[0,\infty)$ and \Pas, by
\begin{equation}\label{eq:observation}
    Y_t = \int_0^t h(X_s) \id s + W_t \;,
\end{equation}
where $W: [0,\infty) \times \Omega \rightarrow \R^{m}$ is an
$(\mathcal{F}_t)_{t\geq0}$-adapted Brownian motion independent of $V$.
The \emph{sensor function} $h: \R^{d} \rightarrow \R^{m}$ is a
globally Lipschitz continuous function with the property that 
for all $t\in[0,\infty)$, \Pas,
\begin{equation*}
    \E{\int_0^t h(X_s)^2 \id s} < \infty \;
    \text{ and } \; \E{\int_0^t Z_s h(X_s)^2 \id s} < \infty,
\end{equation*}
where the stochastic process $Z:[0,\infty) \times \Omega \rightarrow \R$
is defined such that for all $t\in[0,\infty)$,
\begin{equation*}
Z_t = \exp\{-\int_0^t h(X_s)\id W_s - \frac{1}{2} \int_0^t h(X_s)^2 \id s\}.
\end{equation*}
We specify the \emph{observation filtration} for $t\geq0$ by
\begin{equation*}
    \mathcal{Y}_t = {\sigma}(Y_s, s\in[0,t]) \vee \mathcal{N}
    \hspace{10pt}\text{ and write }\hspace{10pt} \mathcal{Y} =
    \sigma\left(\bigcup_{t \in [0,\infty)} \mathcal{Y}_t\right),
\end{equation*}
where $\mathcal{N}$ is the collection of $ \mathbb{P}$-nullsets of
$\mathcal{F}$.
Then we are interested in the $(\mathcal{Y}_t)_{t\geq 0}$-adapted stochastic 
process $\pi: [0,\infty) \times \Omega  \rightarrow \mathcal{P}(\R^d)$
that is defined by the requirement that for all
$\varphi \in B(\R^d,\R)$ and $t\in [0,\infty)$ it holds \Pas{} that
\begin{equation*}
    \pi_t\varphi = \E{ \varphi(X_t) \left| \mathcal{Y}_t \right. }.
\end{equation*}
The process $\pi$ is often called the \emph{filter}.
Under this model, the stochastic process $Z$ is an 
$(\mathcal{F}_t)_{t\geq0}$-martingale and by Novikov's condition
we can use Girsanov's theorem to define the change of measure given by 
$\left. \frac{\id \tilde{\mathbb{P}}^t}
{\id \mathbb{P}}\right|_{\mathcal{F}_t} = Z_t$, $t\geq 0$.
Note that on $\bigcup_{t\in [0,\infty)}\mathcal{F}_t$ we have a
consistent measure $\tilde{\mathbb{P}}$ in place of $\tilde{\mathbb{P}}^t$.
Moreover, the signal and observation processes $X$ and $Y$
are independent under the new measure and $Y$ is a Brownian motion under 
$\tilde{\mathbb{P}}$. 
Furthermore, under $\tilde{\mathbb{P}}$, we can define the stochastic process 
$\rho: [0,\infty) \times \Omega  \rightarrow \mathcal{M}(\R^d)$
by the requirement that for all
$\varphi \in B(\R^d,\R)$ and $t\in [0,\infty)$ it holds \Pas{} that
\begin{equation}\label{eq:rho}
    \rho_t\varphi = \E{ \left.\varphi(X_t) 
    \exp\{\int_0^t h(X_s)\id Y_s - \frac{1}{2} \int_0^t h(X_s)^2 \id s\}
    \right| \mathcal{Y}_t }.
\end{equation}
The following important Proposition~\ref{prop:KS}, known
in the literature as the Kallianpur-Striebel formula,
justifies the terminology to call $\rho$ the
\emph{unnormalised filter}.

\begin{prop}[Kallianpur-Striebel formula]\label{prop:KS}
    For all $t\geq 0$ and $\varphi \in B(\R^d,\R)$ 
    it holds $\tilde{\mathbb{P}}$-a.s. that
    \begin{equation*}
    \pi_t(\varphi) =
      \frac{\displaystyle \rho_t(\varphi)}{\displaystyle 
      \rho_t(\mathbf{1})}
    = \frac{\displaystyle \tE\left[ \left. \varphi(X_t)
        \exp\{\int_0^t h(X_s)\id Y_s -
        \frac{1}{2} \int_0^t h(X_s)^2 \id s\} \right| 
        \mathcal{Y}\right]}
        {\displaystyle \tE\left[\left. 
        \exp\{\int_0^t h(X_s)\id Y_s -
        \frac{1}{2} \int_0^t h(X_s)^2 \id s\} 
        \right| \mathcal{Y}\right]},
    \end{equation*}
    where $\mathbf{1}$ is the constant function $\R^d\ni x \mapsto 1$.
\end{prop}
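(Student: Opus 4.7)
The plan is to derive the identity from the abstract Bayes formula for conditional expectations under an equivalent change of measure, and then to relate the conditioning on $\mathcal{Y}_t$ to conditioning on the full observation $\sigma$-algebra $\mathcal{Y}$ by exploiting the independence of $X$ and $Y$ under $\tilde{\mathbb{P}}$.

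First, I would recall (or establish as a preliminary lemma) the following abstract version of the Bayes formula: if $\mathbb{Q}$ and $\tilde{\mathbb{Q}}$ are two probability measures on $(\Omega,\mathcal{F}_t)$ with $\left.\tfrac{\der\tilde{\mathbb{Q}}}{\der\mathbb{Q}}\right|_{\mathcal{F}_t}=L_t$ strictly positive, and $\mathcal{G}\subset\mathcal{F}_t$ is a sub-$\sigma$-algebra, then for any bounded $\mathcal{F}_t$-measurable $F$,
\begin{equation*}
\mathbb{E}^{\mathbb{Q}}[F \,|\, \mathcal{G}] \;=\; \frac{\tilde{\mathbb{E}}^{\tilde{\mathbb{Q}}}[L_t^{-1}F \,|\, \mathcal{G}]}{\tilde{\mathbb{E}}^{\tilde{\mathbb{Q}}}[L_t^{-1} \,|\, \mathcal{G}]}.
\end{equation*}
I would apply this to $\mathbb{Q}=\mathbb{P}$, $\tilde{\mathbb{Q}}=\tilde{\mathbb{P}}$, $L_t=Z_t$, $F=\varphi(X_t)$ and $\mathcal{G}=\mathcal{Y}_t$. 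Using the equality $Y_t = W_t + \int_0^t h(X_s)\,\der s$ under $\mathbb{P}$, a direct substitution into $Z_t^{-1}=\exp\{\int_0^t h(X_s)\der W_s + \tfrac{1}{2}\int_0^t h(X_s)^2\der s\}$ rewrites the inverse density as $\exp\{\int_0^t h(X_s)\der Y_s - \tfrac{1}{2}\int_0^t h(X_s)^2\der s\}$. This yields the first equality $\pi_t(\varphi)=\rho_t(\varphi)/\rho_t(\mathbf{1})$ directly from the definition \eqref{eq:rho}.

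The second equality is then a matter of replacing $\mathcal{Y}_t$ by $\mathcal{Y}$ inside both numerator and denominator. For this I would invoke the fact, stated in the setup, that under $\tilde{\mathbb{P}}$ the observation $Y$ is a Brownian motion independent of $X$. The integrand $\varphi(X_t)\exp\{\int_0^t h(X_s)\der Y_s - \tfrac{1}{2}\int_0^t h(X_s)^2\der s\}$ is measurable with respect to $\sigma(X_s,\, s\leq t) \vee \mathcal{Y}_t$, while the tail $\sigma(Y_u-Y_t,\, u\geq t)$ is independent of this $\sigma$-algebra under $\tilde{\mathbb{P}}$ (by the Markov property of Brownian motion combined with the $X\perp Y$ independence). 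A standard application of the tower property then gives $\tilde{\mathbb{E}}[\,\cdot\,|\,\mathcal{Y}]=\tilde{\mathbb{E}}[\,\cdot\,|\,\mathcal{Y}_t]$ for both the numerator and the denominator, which completes the identification with the third expression in the statement.

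The main delicate point will be the identification of $\tilde{\mathbb{E}}[\cdot\,|\,\mathcal{Y}_t]$ with $\tilde{\mathbb{E}}[\cdot\,|\,\mathcal{Y}]$; the remaining steps are algebraic manipulations of the density. A secondary technical point is making precise the consistent extension of the family $(\tilde{\mathbb{P}}^t)_{t\geq0}$ to a measure $\tilde{\mathbb{P}}$ on $\bigcup_{t\geq0}\mathcal{F}_t$, which the excerpt asserts without proof and which I would invoke as given. Integrability of $\varphi(X_t)\tilde{Z}_t$ under $\tilde{\mathbb{P}}$ follows from boundedness of $\varphi$ and the martingale property of the exponential, so the conditional expectations in question are well-defined.
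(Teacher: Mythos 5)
The paper gives no proof of its own for this proposition, deferring entirely to Bain and Crisan \cite{bain_crisan_2008}, and your argument is precisely the standard one found there: the abstract Bayes formula under the Girsanov change of measure, the rewriting of $Z_t^{-1}$ via $\der W_s = \der Y_s - h(X_s)\der s$, and the replacement of $\mathcal{Y}_t$ by $\mathcal{Y}$ using the independence of $X$ and the future increments of $Y$ under $\tilde{\mathbb{P}}$. Your proposal is correct and takes essentially the same approach as the cited proof.
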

The proof of Proposition~\ref{prop:KS} can be found in, e.g., 
\cite{bain_crisan_2008}.

\subsection{Filtering equation and general splitting method}\label{sec:F_eq}
It is well established in the literature (see, e.g., \cite{bain_crisan_2008}), 
that the unnormalised filter $\rho$, defined in \eqref{eq:rho},
satisfies the \emph{filtering equation}, i.e. for all $t\geq 0$ it holds 
$\tilde{\mathbb{P}}$-a.s. that
\begin{equation}
    \label{eq:Zakai_rho}
    \rho_t(\varphi) = \pi_0(\varphi) +
        \int_0^t \rho_s(A\varphi)\id s +
        \int_0^t \rho_s(\varphi h^\prime) \id Y_s.
\end{equation}
Moreover, it is known (see, e.g., \cite[Theorem 7.8]{bain_crisan_2008})
that if $\pi_0$ is absolutely continuous with respect to Lebesgue measure and
such that it has a square-integrable density, and if additionally the 
sensor function $h$ is uniformly bounded, then $\rho_t$ admits 
a square-integrable density $p_t$ with respect to the Lebesgue measure on 
$\R^d$. 
Then, assuming the necessary regularity for $p_t$ (see, e.g., \cite[Theorem 7.12]{bain_crisan_2008}, for the precise condition), the 
Zakai equation \eqref{eq:Zakai_rho} implies that, for all
$t\geq 0$ and $\varphi \in C_c^\infty(\R^d,\R)$, 
we have $\tilde{\mathbb{P}}$-a.s. that
\begin{equation*}
    \rho_t(\varphi) = \int_{\R^d} \varphi(x) p_t(x) \id x,
\end{equation*}
where $A^*$ is the formal adjoint of the infinitesimal generator
$A$ of the signal process $X$, given by the relation
\begin{equation*}
    \int_{\R^d} A\varphi(x) p_t(x) \id x = \int_{\R^d} \varphi(x) A^*p_t(x)\id x;
    \; t\geq 0.
\end{equation*}

The PDE method we will consider is from~\cite{cai1995adaptive} and
seeks to approximate
the following  stochastic partial differential equation
(SPDE) for the density $p_t$ given, for all $t\geq0$, $x\in\R^d$, 
and \Pas{} as
\begin{equation*}
    p_t(x) = p_0 (x) + \int_0^t A^* p_s(x)\id s + 
    \int_0^t h^\prime(x) p_s(x)  \id Y_s
\end{equation*}
and relies on the splitting-up algorithm described in \cite{le1989time}
and \cite{legland1992splitting}.

Choose a final time $T>0$ and an integer $N\in\N$ and let 
$\{t_0=0<\dots<t_N=T\}$
be a discretisation of the time interval $[0,T]$.
Then the first step of the splitting-up approach,
also called \emph{prediction} step, is to numerically
approximate the \emph{Fokker-Planck equation} 
\begin{equation}\label{eq:FP_PDE}
    \begin{aligned}
    \frac{\partial q}{\partial t} (t,z)  &= A^* q (t,z), 
    &\; & (t,z)\in (0,T]\times\R^d,\\
        q(0,z) &= p_{0}(z),  &\;  & z\in \R^d,
    \end{aligned}
\end{equation}
over the discretised interval.
To this end, note that the first prediction step of the method consists of the
numerical approximation of the solution $q^1$ of the PDE
\begin{equation*}
    \begin{aligned}
    \frac{\partial q^1}{\partial t} (t,z) &= A^* q^1 (t,z), 
    &\; & (t,z)\in (0,t_1]\times\R^d,\\
        q^1(0,z) &= q^0(0,z) := p_{0}(z), &\;   & z\in \R^d.
    \end{aligned}
\end{equation*}
We denote the numerical approximation of $q^1(t_1,\cdot)$ by $\tilde{p}^1$.
Next, we employ the second step of the method, the so-called \emph{correction} 
step, which consists of the normalisation of the obtained Fokker-Planck 
approximations using the observation process $Y$, as given by 
\eqref{eq:observation}, and the Kallianpur-Striebel
formula (see Proposition~\ref{prop:KS}).
To illustrate this, the first correction step is calculated as follows.
Let 
\begin{equation*}
    z_1 = \frac{1}{t_1-t_0}(Y_{t_1}-Y_{t_0}), \;\text{ consider the function }\; 
    \R^d\ni z\mapsto\xi_1(z) = \exp \left( -\frac{1}{2} ||z_1 - h(z)||^2 
    \right),
\end{equation*}
and define for all $z\in\R^d$,
\begin{equation*}
    p^1(z) = C_1 \xi_1(z) \tilde{p}^1(z),
\end{equation*}
where $C_1$ is the normalisation constant such that
$\int_{\R^d} p^1(z) \id z = 1$.

Therefore, we formulate the splitting-up method below in Method~\ref{meth:PDE}.

\begin{meth}\label{meth:PDE}
The full method is defined by iterating the above steps with
${p}^{0}(\cdot)= p_0(\cdot)$ and such that
for all $n\in\{1,\dots,N\}$ we iteratively calculate
\begin{enumerate}[label=\arabic*)]
    \item\label{predictor}
    an approximation $\tilde{p}^n$ of the solution to
    \begin{equation}\label{eq:num_IVP}
        \begin{aligned}
        \frac{\partial q^n}{\partial t} (t,z) &= A^* q^n (t,z), 
        &\; & (t,z)\in (t_{n-1},t_n]\times\R^d,\\
            q^n(0,z) &= {p}^{n-1}(z), &\;    & z\in \R^d,
        \end{aligned}
    \end{equation}
    at time $t_n$ and
    \item
    the normalisation based on
    \begin{equation*}
        z_n = \frac{1}{t_n-t_{n-1}}(Y_{t_n}-Y_{t_{n-1}}) 
    \end{equation*}
    and the function
    \begin{equation*}
        \R^d\ni z\mapsto\xi_n(z) = \exp \left( -\frac{t_n-t_{n-1}}{2} 
        ||z_n - h(z)||^2 \right),
    \end{equation*}
    so that we can define for all $z\in\R^d$,
    \begin{equation*}
        p^n(z) = \frac{1}{C_n} \xi_n(z) \tilde{p}^n(z),
    \end{equation*}
    where $C_n =
    \int_{\R^d} \xi_n(z)\tilde{p}^n(z) \id z $.
\end{enumerate}
\end{meth}
In this article, we replace the predictor step \ref{predictor} in 
Method~\ref{meth:PDE} above by a deep 
neural network approximation algorithm to avoid an explicit space 
discretisation which has exponential complexity in the space dimension $d$.
This will be achieved by representing each $\tilde{p}^n(z)$ by a feed-forward
neural network and approximating the initial value problem \eqref{eq:num_IVP}
based on its stochastic representation using a sampling procedure.


\section{Feynman-Kac representation and auxiliary diffusion}\label{sec:representatio}
In this section we consider the case when the coefficient functions of the
signal and the observation processes are sufficiently smooth and thus allow
the expansion of the partial differential operator $A^*$.
Based on this expansion we can rewrite the Fokker-Planck
equation \eqref{eq:FP_PDE} as a Kolmogorov equation plus, in general, a 
zeroth-order term.
The reason to do so is that the so obtained representation enables the
use of the Feynman-Kac formula (see Theorem~\ref{thm:finalValuePDE} below)
to rewrite the solution of the PDE
problem as an expectation of an appropriately chosen stochastic process.
Thus, we can approximate this expectation by Monte-Carlo sampling from the 
diffusion.

This particular approach follows a recent stream of research into deep learning based
approximations of PDEs which is mainly focused on high dimensional problems, see, e.g.
\cite{2016arXiv161107422H, 2017arXiv170604702E, 2018arXiv180600421B, beck2019deep} and 
related works within the context of stochastic optimal control \cite{pham2019neural, hur2018deep, hur2019machine, pham2019neural, pham2021neural}.
Alternative approaches, typically based on incorporating the PDE directly into the loss function, for the approximation of a neural network representation of solutions of PDEs are also actively developed in the literature, see, for example, \cite{raissi2017physics, Sirignano_2018, alaradi2018solving}. 

\subsection{Fokker-Planck equation}
We begin by expanding the differential operator under the assumption that it has smooth coefficient functions.
    As before, we let $d,p\in\N$,
    $f=(f_i)_{i=1}^d \in C_b^1(\R^d,\R^d)$, 
    let $\sigma=(\sigma_{ij})_{j=1,\dots,p}^{i=1,\dots,d} 
    \in C_b^2(\R^d,\R^{d\times p})$,
    let $a=(a_{ij})_{i,j=1}^{d}\in  C_b^2(\R^d,\R^{d\times d})$
    be the function that maps
    $x \mapsto  \frac{1}{2}\sigma(x)\sigma^\prime(x)$ and let
    $A^*:C_c^\infty(\R^d,\R)\rightarrow C_b(\R^d,\R)$ be the 
    partial differential operator  
    with the property that for all $\varphi \in C_c^\infty(\R^d,\R)$,
    \begin{equation*}
        A^* \varphi = - \sum_{i=1}^d \frac{\partial}{\partial x_i} f_i\varphi
            + \sum_{i,j=1}^d \frac{\partial^2}{\partial x_i \partial x_j}
            a_{ij}\varphi.
    \end{equation*}
    Then, for all $\varphi \in C_c^\infty(\R^d,\R)$ we have
    \begin{equation}
    \label{eq:op_expansion}
        A^* \varphi = \Tr(a\Hess \varphi) +
            \langle 2\vecdiv(a)-f, \grad \varphi \rangle +
            \diver(\vecdiv(a) - f)\varphi.
    \end{equation}

\begin{defi}\label{def:A_hat}
    Let $d,p\in\N$,
    $f=(f_i)_{i=1}^d \in C_b^1(\R^d,\R^d)$, 
    let $\sigma=(\sigma_{ij})_{j=1,\dots,p}^{i=1,\dots,d} 
    \in C_b^2(\R^d,\R^{d\times p})$,
    and let $a=(a_{ij})_{i,j=1}^{d}\in  C_b^2(\R^d,\R^{d\times d})$
    be the function that maps
    $x \mapsto  \frac{1}{2}\sigma(x)\sigma^\prime(x)$.
    Then we define the partial differential operator 
    $\hat{A}:C_c^\infty(\R^d,\R)\rightarrow C_b(\R^d,\R)$ such that
    for all $\varphi \in C_c^\infty(\R^d,\R)$,
    \begin{equation*}
        \hat{A} \varphi = \Tr(a\Hess \varphi) +
            \langle 2\vecdiv(a)-f, \grad \varphi \rangle
    \end{equation*}
    and we define the function $r:\R^d\rightarrow \R$ such that for all
    $x\in \R^d$,
    \begin{equation*}
        r(x) = \diver(\vecdiv(a)-f)(x).
    \end{equation*}
\end{defi}

\begin{lemma}\label{prop:diffusion}
    For all $x\in \R^d$ the operator $\hat{A}$ defined in
    Definition~\ref{def:A_hat} is the infinitesimal generator
    of the Itô diffusion $\hat{X}:[0,\infty)\times \Omega \rightarrow \R^d$
    given, for all $t\geq0$ and \Pas{} by
    \begin{equation*}
        \hat{X}_t =x + \int_0^t b(\hat{X}_s) \der s +
            \int_0^t \sigma(\hat{X}_s) \der \hat{W}_s,
    \end{equation*}
    where $\hat{W}:[0,\infty)\times \Omega \rightarrow \R^d$
    is a $d$-dimensional Brownian motion and
    $b:\R^d \rightarrow \R^d$ is the function
    \begin{equation*}
        b = 2\vecdiv(a) - f.
    \end{equation*}
\end{lemma}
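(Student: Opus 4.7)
The plan is to apply the standard characterization of an Itô diffusion's generator via Itô's formula. Since $b = 2\vecdiv(a) - f$ involves the componentwise divergence of $a$, the assumption $\sigma \in C_b^2(\R^d,\R^{d\times p})$ gives $a \in C_b^2$, and hence $b \in C_b^1(\R^d,\R^d)$. Together with $\sigma$ itself being globally Lipschitz and of linear growth, the classical existence and uniqueness theorem for SDEs with Lipschitz coefficients (see, e.g., any standard reference on stochastic calculus) guarantees that the SDE for $\hat{X}$ admits a unique strong solution with $\hat{X}_0 = x$.

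Next, I would fix an arbitrary $\varphi \in C_c^\infty(\R^d,\R)$ and apply Itô's formula to $t \mapsto \varphi(\hat{X}_t)$. This yields
\begin{equation*}
\varphi(\hat{X}_t) = \varphi(x) + \int_0^t \langle b(\hat{X}_s), \grad \varphi(\hat{X}_s)\rangle \der s + \tfrac{1}{2}\int_0^t \Tr\bigl(\sigma\sigma'(\hat{X}_s)\Hess\varphi(\hat{X}_s)\bigr) \der s + M_t,
\end{equation*}
where $M_t = \int_0^t \langle \grad\varphi(\hat{X}_s), \sigma(\hat{X}_s)\der \hat{W}_s\rangle$. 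Since $a = \tfrac{1}{2}\sigma\sigma'$, the second-order term equals $\int_0^t \Tr(a(\hat{X}_s)\Hess\varphi(\hat{X}_s))\der s$, and substituting $b = 2\vecdiv(a)-f$ into the first integral gives precisely $\int_0^t \hat{A}\varphi(\hat{X}_s)\der s$. Because $\varphi$ has compact support, $\grad\varphi$ is bounded, and since $\sigma \in C_b^2$ the integrand of $M$ is bounded, so $M$ is a genuine martingale with $\tE[M_t]=0$.

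Taking expectations, I obtain
\begin{equation*}
\mathbb{E}[\varphi(\hat{X}_t)] - \varphi(x) = \mathbb{E}\!\left[\int_0^t \hat{A}\varphi(\hat{X}_s)\der s\right].
\end{equation*}
Dividing by $t$ and sending $t \downarrow 0$ finishes the argument, provided I can interchange the limit with the expectation. This is where the smoothness hypotheses on $f$ and $\sigma$ pay off: $\hat{A}\varphi$ is a continuous bounded function (product of $C_b^2$ coefficients with the bounded derivatives of the compactly supported $\varphi$), and $\hat{X}_s \to x$ almost surely as $s\downarrow 0$ by path continuity. Hence bounded convergence gives $\lim_{t\downarrow 0}\frac{1}{t}\mathbb{E}[\varphi(\hat{X}_t) - \varphi(x)] = \hat{A}\varphi(x)$, which is the definition of $\hat{A}$ being the infinitesimal generator of $\hat{X}$.

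The only mildly delicate point, and therefore the main obstacle, is justifying the boundedness needed to make $M$ a martingale and to apply bounded convergence at the end; both follow straightforwardly from the compact support of $\varphi$ together with the global boundedness of $\sigma$, $a$, and their derivatives, but it is worth flagging rather than waving away. Everything else is an essentially mechanical application of Itô's formula and the linearity of expectation.
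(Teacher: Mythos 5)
Your argument is correct and is exactly the standard one: the paper itself offers no proof beyond a citation to Ikeda--Watanabe (Chapter IV, Theorem 6.1), and your It\^o-formula computation, the boundedness of the stochastic integral's integrand via the compact support of $\varphi$ and $\sigma\in C_b^2$, and the bounded-convergence passage to the limit $t\downarrow 0$ constitute precisely the proof that citation stands in for. The only cosmetic slip is the stray $\tE[M_t]=0$, which should be $\mathbb{E}[M_t]=0$ since no change of measure is involved here.
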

\begin{proof}
    cf. \cite[Chapter IV, Theorem 6.1]{IkedaWatanabe1981}
\end{proof}

The next Theorem~\ref{thm:finalValuePDE} is the well-known Feynman-Kac formula.

\begin{thm}[Feynman-Kac formula]\label{thm:finalValuePDE}
    Let $d\in\N$, $T>0$, $k\in C(\R^d, [0,\infty))$,
    let $\hat{A}$ be the operator defined in
    Definition~\ref{def:A_hat},
    and let $\psi:\R^d \rightarrow \R$ be an at most polynomially growing
    function\footnotemark.
     If $v \in C_b^{1,2}([0,T)\times \R^d,\R)$
    satisfies the Cauchy problem
    \begin{equation}\label{eqn:finalValuePDE}
    \begin{aligned}
        - \frac{\partial v}{\partial t}(t,x) + k(x)v(t,x) &= \hat{A}v(t,x),&\; &
        (t,x)\in[0,T)\times \R^d,\\
        v(T,x) &= \psi(x),  &\; &x\in \R^d,
    \end{aligned}
    \end{equation}
    then we have for all $(t,x) \in [0,T)\times \R^d$ that
    \begin{equation*}
        v(t,x) = \E{\left. \psi(\hat{X}_T)\exp\left(-\int_t^T k(\hat{X}_\tau)
             \id\tau\right) \right| \hat{X}_t = x},
    \end{equation*}
    where $\hat{X}$ is the diffusion generated by $\hat{A}$.
\end{thm}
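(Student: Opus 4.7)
The plan is to apply Itô's formula to the product process
\[
M_s \;=\; v(s,\hat{X}_s)\,\exp\!\Bigl(-\int_t^s k(\hat{X}_\tau)\,\der\tau\Bigr),\qquad s\in[t,T),
\]
where $\hat{X}$ starts at $x$ at time $t$ (so conditioning on $\hat{X}_t=x$ reduces to working with this fixed initial condition). The smoothness hypothesis $v\in C_b^{1,2}([0,T)\times\R^d,\R)$ is exactly what is needed to apply Itô's formula; the continuity of $k$ and the boundedness of $b,\sigma$ guarantee all integrals make sense.

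First, I would compute $\der M_s$. Writing $E_s = \exp(-\int_t^s k(\hat{X}_\tau)\,\der\tau)$, the product/Itô rule gives
\[
\der M_s \;=\; E_s\Bigl[\partial_t v(s,\hat{X}_s) + \hat{A}v(s,\hat{X}_s) - k(\hat{X}_s)\,v(s,\hat{X}_s)\Bigr]\der s \;+\; E_s\,\bigl\langle\grad v(s,\hat{X}_s),\sigma(\hat{X}_s)\,\der\hat{W}_s\bigr\rangle,
\]
since $\hat{A}$ is the generator of $\hat X$ by Lemma~\ref{prop:diffusion}, and the $-k v$ contribution comes from differentiating $E_s$. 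By the PDE~\eqref{eqn:finalValuePDE}, the bracketed drift term vanishes identically, so $M$ is a local martingale on $[t,T)$.

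Next, I would upgrade the local martingale to a true martingale. Because $v$ is bounded on $[0,T)\times\R^d$ and $k\geq 0$ makes $E_s\in(0,1]$, the process $M_s$ itself is bounded on $[t,T)$; and the stochastic integral term has bounded integrand $E_s\,\grad v(s,\hat X_s)^\prime\sigma(\hat X_s)$ (using $\grad v$ bounded and $\sigma$ bounded), so a standard localisation-plus-dominated-convergence argument upgrades it to a genuine martingale. Taking expectations conditional on $\hat{X}_t=x$ therefore yields $\E{M_s\mid \hat{X}_t=x}=M_t=v(t,x)$ for every $s\in[t,T)$.

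Finally, I would let $s\uparrow T$. Here lies the only delicate point: $v$ is assumed $C_b^{1,2}$ only on $[0,T)\times\R^d$, while the terminal condition is imposed at $s=T$ through $\psi$. I would invoke continuity of the paths of $\hat{X}$ and the boundary condition $v(T,\cdot)=\psi$ — interpreted as a limit $v(s,y)\to\psi(y)$ as $s\uparrow T$ in an appropriate sense compatible with the hypothesis — together with the almost sure convergence $E_s\to\exp(-\int_t^T k(\hat{X}_\tau)\,\der\tau)$. Since $v$ is bounded, $M_s$ is uniformly bounded in $s\in[t,T)$, so dominated convergence passes the limit inside the expectation and gives
\[
v(t,x) \;=\; \E{\psi(\hat{X}_T)\,\exp\!\Bigl(-\int_t^T k(\hat{X}_\tau)\,\der\tau\Bigr)\,\Big|\,\hat{X}_t=x}.
\]
The main obstacle is precisely this last step: reconciling the polynomial growth of $\psi$ with the boundedness of $v$ requires a careful reading of the hypothesis (the boundedness of $v$ together with continuity at $T$ implicitly restricts which $\psi$ can actually arise, and one uses dominated convergence rather than any sharper growth estimate on $\hat X_T$). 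Everything else is a routine application of Itô's formula and the martingale property.
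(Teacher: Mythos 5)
Your argument is essentially correct, but it is worth noting that the paper does not actually prove this theorem at all: it simply cites \cite[Chapter 5, Theorem 7.6]{KaratzasShreve1998} and remarks that the boundedness of the derivatives of the coefficients of $\hat{A}$ guarantees the hypotheses there. So your proposal supplies a self-contained proof where the paper defers to a reference. Your route --- It\^o's formula applied to $M_s = v(s,\hat X_s)\exp(-\int_t^s k(\hat X_\tau)\,\der\tau)$, cancellation of the drift via the PDE, the bounded-local-martingale-is-a-martingale step, and dominated convergence as $s\uparrow T$ --- is the standard direct argument, and it is \emph{simpler} than the cited Karatzas--Shreve proof precisely because you exploit the $C_b^{1,2}$ hypothesis: boundedness of $v$ and of $\grad v$ lets you skip the localisation by exit times from balls and the moment estimates on $\hat X$ that Karatzas--Shreve need to handle solutions of polynomial growth. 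You also correctly identify the one genuine wrinkle in the statement as given: the terminal condition is imposed at $s=T$ while regularity is only assumed on $[0,T)$, and the polynomial-growth hypothesis on $\psi$ is effectively subsumed by the boundedness of $v$ once one insists on continuity up to $t=T$; your resolution (interpret $v(T,\cdot)=\psi$ as a continuous extension and pass to the limit by dominated convergence, using $0<E_s\leq 1$ from $k\geq 0$) is the right one. In short: correct, more elementary and more explicit than what the paper offers, at the cost of implicitly strengthening the statement to bounded $\psi$.
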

\footnotetext{
    I.e. there exist real numbers 
    $\lambda \geq 1$ and $L\geq 0$ such that for all $x\in \R^d$,     
    $|\psi(x)| \leq L(1+\| x \|^{2\lambda})$.
    
}
\begin{proof}
See \cite[Chapter 5, Theorem 7.6]{KaratzasShreve1998}.
The assumption that the coefficients of $\hat{A}$ have bounded derivatives
ensures that the required conditions are met.
\end{proof}

From Theorem~\ref{thm:finalValuePDE} above we can deduce the 
Corollary~\ref{thm:initialValuePDE} below about the
initial value problem corresponding to~\eqref{eqn:finalValuePDE}.

\begin{cor}\label{thm:initialValuePDE}
    Under the assumptions of the previous Theorem~\ref{thm:finalValuePDE},
    suppose that $u \in C_b^{1,2}((0,T]\times \R^d,\R)$ satisfies the Cauchy 
    problem
    \begin{equation}\label{eqn:initialValuePDE}
    \begin{aligned}
         \frac{\partial u}{\partial t}(t,x) + k(x)u(t,x) &= \hat{A}u(t,x),  &\; 
         &(t,x)\in (0,T] \times \R^d,\\
        u(0,x) &= \psi(x), &\; &x\in \R^d.
    \end{aligned}
    \end{equation}
    Then, for all $(t,x) \in (0,T]\times \R^d$, we have that
    \begin{equation*}
        u(t,x) = \E{\left. \psi(\hat{X}_t)\exp\left(-\int_0^t k(\hat{X}_\tau)
        \id\tau\right) \right| \hat{X}_0 = x},
    \end{equation*}
    where $\hat{X}$ is the diffusion generated by $\hat{A}$.
\end{cor}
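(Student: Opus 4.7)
The plan is to reduce the initial-value problem to the final-value problem of Theorem~\ref{thm:finalValuePDE} by a time reversal, and then use the time-homogeneity of the diffusion $\hat{X}$ to shift the starting time back to zero.

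First, I would define $v:[0,T)\times\R^d\to\R$ by $v(s,x)=u(T-s,x)$. Since $u\in C_b^{1,2}((0,T]\times\R^d,\R)$ and the map $s\mapsto T-s$ sends $[0,T)$ bijectively onto $(0,T]$ while preserving boundedness and smoothness of derivatives, one gets $v\in C_b^{1,2}([0,T)\times\R^d,\R)$. Chain rule gives $\partial_s v(s,x)=-\partial_t u(T-s,x)$, and substituting into \eqref{eqn:initialValuePDE} yields
\begin{equation*}
    -\frac{\partial v}{\partial s}(s,x)+k(x)v(s,x)=\hat{A}v(s,x),\qquad (s,x)\in[0,T)\times\R^d,
\end{equation*}
together with the terminal condition $v(T,x)=u(0,x)=\psi(x)$. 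Hence $v$ satisfies the hypotheses of Theorem~\ref{thm:finalValuePDE} with the same $k$, $\psi$ and $\hat{A}$.

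Applying Theorem~\ref{thm:finalValuePDE} to $v$, I would obtain, for every $(s,x)\in[0,T)\times\R^d$,
\begin{equation*}
    v(s,x)=\E{\left.\psi(\hat{X}_T)\exp\!\left(-\int_s^T k(\hat{X}_\tau)\id\tau\right)\right|\hat{X}_s=x}.
\end{equation*}
Setting $s=T-t$ for $t\in(0,T]$ yields
\begin{equation*}
    u(t,x)=v(T-t,x)=\E{\left.\psi(\hat{X}_T)\exp\!\left(-\int_{T-t}^T k(\hat{X}_\tau)\id\tau\right)\right|\hat{X}_{T-t}=x}.
\end{equation*}

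Finally, because the coefficients $b$ and $\sigma$ of $\hat{X}$ in Lemma~\ref{prop:diffusion} are time-independent, $\hat{X}$ is a time-homogeneous Markov process. Conditionally on $\hat{X}_{T-t}=x$, the shifted process $(\hat{X}_{T-t+\tau})_{\tau\geq 0}$ has the same law as $(\hat{X}_\tau)_{\tau\geq 0}$ conditioned on $\hat{X}_0=x$. Applying this change of variable $\tau\mapsto\tau-(T-t)$ inside both the integral and the terminal evaluation gives the claimed representation. The only nontrivial point is verifying that $v$ meets the regularity assumption of Theorem~\ref{thm:finalValuePDE}, which is immediate from the change of variable; the rest is bookkeeping with time-homogeneity.
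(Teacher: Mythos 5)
Your proposal is correct and follows essentially the same route as the paper: a time reversal $v(s,x)=u(T-s,x)$ to invoke the final-value Feynman--Kac formula, followed by time-homogeneity of $\hat{X}$ to shift the conditioning from time $T-t$ back to time $0$. The only (cosmetic) difference is the direction of the substitution --- the paper defines $u$ from a solution $v$ of the final-value problem, whereas you define $v$ from the given $u$, which is arguably the cleaner logical order for a statement that assumes $u$ as given.
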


\begin{proof}
    For all $(s,x)\in (0,T]\times \R^d$, set $u(s,x) = v(T-s,x)$,
    where $v \in C_b^{1,2}([0,T)\times \R^d,\R)$ 
    satisfies \eqref{eqn:finalValuePDE}.
    Then, $u \in C_b^{1,2}((0,T]\times \R^d,\R)$ and
    \eqref{eqn:finalValuePDE} implies that 
    $u$ satisfies \eqref{eqn:initialValuePDE}, i.e.
    \begin{equation*}
    \begin{aligned}
         \frac{\partial u}{\partial t}(t,x) + k(x)u(t,x) &= \hat{A}u(t,x),  &\; 
         &(t,x)\in (0,T] \times \R^d,\\
        u(0,x) &= \psi(x),  &\; &x\in \R^d.
    \end{aligned}
    \end{equation*}
    Hence, we are in the realm of the claim.
    Further, since $\hat{X}$ is a time-homogeneous Markov process, we have
    for all $(s,x)\in (0,T]\times \R^d$,
    \begin{equation}\label{eq:u_forward}
    \begin{aligned}
        u(s,x) = v(T-s,x) &= \E{\left. \psi(\hat{X}_T)\exp\left(-\int_{T-s}^T
            k(\hat{X}_\tau) \id\tau\right) \right| \hat{X}_{T-s} = x} \\
        &= \E{\left. \psi(\hat{X}_s)\exp\left(-\int_0^s k(\hat{X}_\tau)
            \id\tau\right) \right| \hat{X}_0 = x}.
    \end{aligned}
    \end{equation}
    Therefore, replacing $s$ by $t$ in the above equation \eqref{eq:u_forward} 
    proves the assertion.
\end{proof}

In view of our original problem, the Fokker-Planck equation \eqref{eq:num_IVP} 
that we want to solve numerically, in this case, reads 
for all $n\in\{1,\dots,N\}$ as
\begin{equation*}
    \begin{aligned}
    \frac{\partial q^n}{\partial t} (t,z) &= \hat{A}q^n(t,z) + r(z)q^n(t,z), 
    &\; & (t,z)\in (t_{n-1},t_n]\times\R^d,\\
        q^n(0,z) &= {p}^{n-1}(z), &\;    & z\in \R^d.
    \end{aligned}
\end{equation*}
Therefore, considering $k = -r$,
and assuming that $-r$ is non-negative
in \eqref{eqn:initialValuePDE}, Corollary~\ref{thm:initialValuePDE} gives,
for all $n\in\{1,\dots,N\}$, $t\in (t_{n-1},t_n]$, $z\in \R^d$, 
the representation
\begin{equation*}
    q^n(t,z) = \E{\left. p^{n-1}(\hat{X}_t)
        \exp\left(\int_{t_{n-1}}^t r(\hat{X}_\tau) \id\tau\right) \right|
        \hat{X}_{t_{n-1}} = z}.
\end{equation*}

\smallskip

To be explicit, in the next subsection we formulate two specific examples of filtering problems and 
show how they fit into the framework developed thus far by providing the 
auxiliary diffusion and conditional expectation representations for each of these cases.

\subsection{Two simple examples of filtering models}

The following are two simple examples for filtering problems, which will be used as benchmarks in the numerical studies. The results from the previous section hold true for these examples, even though the corresponding coefficients do not satisfy the uniform boundedness.
The linear filter in Example~\ref{ex:linear_filter} below is formulated in arbitrary finite dimensions.
Additionally, we give in Example~\ref{ex:benes-filter} the model for the purely one-dimensional, 
but nonlinear, Benes filter. For more details on the presented examples the reader may consult \cite[Chapter~6]{bain_crisan_2008}

\begin{example}[Linear Filter]\label{ex:linear_filter}
    For the Kalman filter we have the signal process given by the coefficient
    functions
    \begin{equation*}
        f(x) = M x + \eta \; \text{ and }\;
        \sigma(x) = \Sigma
    \end{equation*}
    and the observation process is determined by the sensor function
    \begin{equation*}
        h(x) = H  x + \gamma.
    \end{equation*}
    In this case, when $X_0$ is assumed normally distributed, the solution 
    $\pi_t$ of the filtering problem is known to be a Gaussian distribution
    with known mean and covariance, 
    see for example~\cite[Chapter 6.2]{bain_crisan_2008}.
    Then,
    for the linear filter, we see that the auxiliary diffusion process
    takes the form
    \begin{equation*}
        \hat{X}_t = \hat{X}_0 - \int_0^t M \hat{X}_s +\eta\, \der s +
            \int_0^t \Sigma \,\der \hat{W}_s,
    \end{equation*}
    and is thus the well-known Ornstein-Uhlenbeck process, plus an additional drift represented by $\eta$, with explicit 
    representation, in terms of the usual matrix exponential,
    \begin{equation*}
        \hat{X}_t = \exp\{-Mt\}
        \left( \hat{X}_0 + \int_0^t \exp\{Ms\}\Sigma \,\der \hat{W}_s\right).
    \end{equation*}
    Moreover, $r(x) = -\diver f(x) = - \operatorname{Tr} M$. Then the method for the 
    linear filter is given by the representation
    \begin{equation*}
        q^n(t,z) = \E{\left. p^{n-1}(\hat{X}_t)
            \exp\left(- \operatorname{Tr} M (t - t_{n-1})\right) \right|
            \hat{X}_{t_{n-1}} = z}.
    \end{equation*}
\end{example}

\smallskip
\begin{example}[Benes Filter]\label{ex:benes-filter}
    For the Benes filter we have one-dimensional signal and observation processes.
    The signal is given by the coefficient functions
    \begin{equation*}
        f(x) = \alpha\sigma \tanh (\beta+\alpha x/ \sigma) \;\text{ and } \;
        \sigma(x) \equiv \sigma \in \R,
    \end{equation*}
    where $\alpha, \beta \in \R$
    and the observation is given by the affine-linear sensor function
    \begin{equation*}
        h(x) = h_1 x + h_2,
    \end{equation*}
    with $h_1, h_2 \in \R$. Note that here we have given a special case of the 
    more general class of Benes filters, see~\cite[Chapter 6.1]{bain_crisan_2008}.
    Now, similar to the previous example, we compute the auxiliary diffusion
    \begin{equation*}
        \hat{X}_t = \hat{X}_0 - \int_0^t \alpha\sigma\tanh(\beta + \alpha x / \sigma) \, \der s +
            \int_0^t \sigma \,\der \hat{W}_s,
    \end{equation*}
    and the coefficient
    \begin{equation*}
        r(x) = - \diver f(x) = 
            -\alpha^2 \operatorname{sech}^2(\beta + \alpha x / \sigma).
    \end{equation*}
    This yields the scheme for the Benes case to be derived from the representation
    \begin{equation*}
        q^n(t,z) = \E{\left. p^{n-1}(\hat{X}_t)
            \exp\left(- \int_{t_{n-1}}^t \alpha^2\operatorname{sech}^2(\beta + \alpha \hat{X}_\tau /\sigma) \,\der \tau\right) \right|
            \hat{X}_{t_{n-1}} = z}.
    \end{equation*}
\end{example}

In the following subsection we introduce the optimisation problem associated with the filtering problem discussed above, and which is based on the simulation of the auxiliary diffusion. 

\subsection{Optimisation problem for the prior}\label{sec:optim}
Above we have found a Feynman-Kac representation
for the solution of the Fokker-Planck equation in the case of
smooth coefficients of the signal process.
In analogy to \cite[Proposition 2.7]{2018arXiv180600421B}
we formulate the following result about a minimisation property
 in Proposition~\ref{thm:minimizer} below.

\begin{prop}\label{thm:minimizer}
    Let $d\in\N$, $T>0$, $a<b\in\R$, $k\in C(\R^d, [0,\infty))$,
    let $\hat{A}$ be the operator defined in
    Definition~\ref{def:A_hat},
    and let $\psi:\R^d \rightarrow \R$ be an at most polynomially growing
    function.
    Suppose that $u \in C_b^{1,2}((0,T]\times \R^d,\R)$ 
    satisfies the Cauchy problem
    \begin{equation*}\label{eqn:hmm}
    \begin{aligned}
         \frac{\partial u}{\partial t}(t,x) + k(x)u(t,x) &= \hat{A}u(t,x),  
         &\; & (t,x)\in (0,T] \times \R^d,\\
        u(0,x) &= \psi(x),  &\; &x\in \R^d,
    \end{aligned}
    \end{equation*}
let $\xi: \Omega \rightarrow  [a,b]^d$ be a continuous, uniformly distributed
$\mathcal{F}_0$-random variable and let $\hat{\mathbb{X}}$ be the diffusion
generated by $\hat{A}$ and with the property that, \Pas{}, 
$\hat{\mathbb{X}}_0 = \xi$.
Then there exists a unique continuous function $U:[a,b]^d \rightarrow \R$
such that
\begin{multline*}
    \E{\left| \psi(\hat{\mathbb{X}}_T)\exp\left(-\int_0^T
        k(\hat{\mathbb{X}}_\tau) \id\tau\right) - U(\xi)\right|^2}\\ =
        \inf_{v \in C([a,b]^d,\R)} \E{\left| \psi(\hat{\mathbb{X}}_T)
        \exp\left(-\int_0^T k(\hat{\mathbb{X}}_\tau) \id\tau\right) -
        v(\xi)\right|^2}
\end{multline*}
and for all $x \in [a,b]^d$ we have $U(x)=u(T,x)$.
\end{prop}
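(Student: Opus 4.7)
The approach is to recognise the minimisation problem as an $L^2$-projection whose minimiser is a conditional expectation, identify that conditional expectation as $u(T,\xi)$ via the Markov property of $\hat{\mathbb{X}}$ together with Corollary~\ref{thm:initialValuePDE}, and finally use continuity together with the full support of the law of $\xi$ to promote almost-sure equality to pointwise equality on $[a,b]^d$.

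Concretely, I would begin by setting $Y := \psi(\hat{\mathbb{X}}_T)\exp\bigl(-\int_0^T k(\hat{\mathbb{X}}_\tau)\id\tau\bigr)$ and verifying that $Y\in L^2(\mathbb{P})$. Since $k\geq 0$, the exponential factor is bounded by $1$; moreover $\xi$ is bounded and $\hat{\mathbb{X}}$ solves an SDE with $C_b^1$ drift and $C_b^2$ diffusion coefficient, so $\hat{\mathbb{X}}_T$ has moments of every order. Combined with the polynomial growth of $\psi$, this yields $Y\in L^p(\mathbb{P})$ for every $p\geq 1$, which legitimises the subsequent use of $L^2$-projections and conditional expectations.

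Next, by the standard orthogonal-projection theorem in $L^2$, the map $v\mapsto \mathbb{E}\bigl[|Y-v(\xi)|^2\bigr]$, taken over all Borel $v:[a,b]^d\to\R$ with $v(\xi)\in L^2(\mathbb{P})$, is minimised $\mathbb{P}$-a.s.\ uniquely by a version of the conditional expectation $\mathbb{E}[Y\mid\sigma(\xi)]$. Since $\xi$ is $\mathcal{F}_0$-measurable, is independent of the driving Brownian motion $\hat{W}$, and $\hat{\mathbb{X}}$ is time-homogeneous Markov with $\hat{\mathbb{X}}_0=\xi$, a disintegration on $\{\xi=x\}$ gives $\mathbb{E}[Y\mid\xi=x] = \mathbb{E}[Y\mid \hat{\mathbb{X}}_0 = x]$ for almost every $x\in[a,b]^d$. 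By Corollary~\ref{thm:initialValuePDE}, this right-hand side equals $u(T,x)$, so the choice $U(x):=u(T,x)$ produces a Borel function with $U(\xi)=\mathbb{E}[Y\mid\sigma(\xi)]$ $\mathbb{P}$-almost surely.

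Finally, $U$ inherits continuity from $u(T,\cdot)\in C_b^{1,2}$, so $U\in C([a,b]^d,\R)$ and it realises the infimum over all Borel, hence also over all continuous, competitors. For uniqueness inside $C([a,b]^d,\R)$, suppose $\tilde{U}\in C([a,b]^d,\R)$ also attains the infimum. The orthogonality identity
\[
\mathbb{E}\bigl[|Y-\tilde{U}(\xi)|^2\bigr] = \mathbb{E}\bigl[|Y-U(\xi)|^2\bigr] + \mathbb{E}\bigl[|U(\xi)-\tilde{U}(\xi)|^2\bigr]
\]
then forces $\tilde{U}(\xi)=U(\xi)$ $\mathbb{P}$-a.s. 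Since $\xi$ is uniformly distributed on $[a,b]^d$ its law has full topological support, so $\tilde{U}=U$ holds on a dense subset of $[a,b]^d$, and joint continuity extends the equality to the whole cube. The main subtlety I expect is the conditional-expectation identification, which must cleanly combine the independence of $\xi$ and $\hat{W}$ with the time-homogeneous Markov property and Corollary~\ref{thm:initialValuePDE}; the $L^2$ integrability check and the continuity-plus-support uniqueness step are essentially routine.
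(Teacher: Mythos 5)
Your proof is correct, and it rests on the same core identification as the paper's: the minimiser is a continuous version of the conditional expectation $\E{Y\mid\sigma(\xi)}$, which the Feynman--Kac formula (Corollary~\ref{thm:initialValuePDE}) identifies with $u(T,\cdot)$. The mechanism you use for existence and uniqueness, however, is genuinely different. The paper verifies the $L^2$ bound, the continuity of $x\mapsto\E{\psi(\hat{X}^x_T)\exp(-\int_0^T k(\hat{X}^x_\tau)\id\tau)}$ and the joint measurability in $(x,\omega)$, and then outsources the existence and uniqueness of the continuous minimiser of the \emph{integrated} functional $v\mapsto\int_{[a,b]^d}\E{|\,\cdots-v(x)|^2}\id x$ to \cite[Proposition 2.2]{2018arXiv180600421B}, afterwards translating back and forth between that formulation and the $\xi$-formulation via \cite[Lemma 2.6]{2018arXiv180600421B}. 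You instead argue directly in $L^2(\Omega)$: the Hilbert-space projection theorem gives the minimiser over all Borel competitors, the Pythagorean identity forces any continuous minimiser to agree with $U(\xi)$ almost surely, and the full support of the uniform law of $\xi$ plus continuity upgrades this to equality on all of $[a,b]^d$. This is more self-contained and avoids the change of formulation, and the integrability and uniqueness steps are indeed routine as you say. The one step both proofs genuinely need, and which you correctly flag as the main subtlety, is the disintegration $\E{Y\mid\xi=x}=\E{Y\mid\hat{\mathbb{X}}_0=x}$ for a.e.\ $x$; this requires the measurable (indeed continuous, polynomially growing) dependence of the functional of the flow on its initial condition together with the independence of $\xi$ from the driving Brownian motion, which is exactly the content of \cite[Lemma 2.6]{2018arXiv180600421B} that the paper invokes. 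Modulo writing out that step or citing the same lemma, your argument is complete.
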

\begin{proof}
    Let $T>0$. For all $x\in\R^d$, let $\hat{X}^x$ be the $\hat{A}$-diffusion 
    starting at $x$.
    Since, by assumption, $k$ is non-negative and $\psi$ has polynomial growth
    it follows that there exist real numbers $L>0$ and $\lambda\geq1$ such
    that for all $x\in\R^d$,
    \begin{equation}\label{eqn:firstL2}
        \E{\left| \psi(\hat{X}^x_T) \exp\left(-\int_0^T k(\hat{X}^x_\tau)
        \id\tau\right) \right|^2}  \leq \E{\left|
        L(1+ \| \hat{X}^x_T \|^{2\lambda}) \right|^2} < \infty.
    \end{equation}
    Further, because the map 
    \begin{equation*}
        \R^d \ni x \mapsto  \psi(\hat{X}^x_T) \exp\left(-\int_0^T
            k(\hat{X}^x_\tau) \id\tau\right)
    \end{equation*}
    is continuous and at most polynomially growing,
     \cite[Lemma 2.6]{2018arXiv180600421B} implies that the function
    \begin{equation}\label{eqn:secondcont}
        \R^d \ni x \mapsto \E{ \psi(\hat{X}^x_T) \exp\left(-\int_0^T k(\hat{X}^x_\tau)
            \id\tau\right) }
    \end{equation}
    is continuous. 
    Note that
    the function
    \begin{equation}\label{eqn:thirdmeas}
        \R^d \times \Omega \ni (x,\omega ) \mapsto \psi( \hat{X}^x_T(\omega ) )
            \exp\left(-\int_0^T k(\hat{X}^x_\tau(\omega))\id\tau \right)
    \end{equation}
    is $\mathcal{B}([a,b]^d) \otimes \mathcal{F}/ \mathcal{B}(\R^d)$-measurable.
    Finally, by virtue of \eqref{eqn:firstL2}, \eqref{eqn:secondcont},
    \eqref{eqn:thirdmeas}, and \cite[Proposition 2.2]{2018arXiv180600421B},
    there exists a unique continuous function $U:[a,b]^d \rightarrow \R$ 
    such that
    \begin{align*}
        &\int_{[a,b]^d}  \E{\left| \psi(\hat{X}^x_T)\exp\left(-\int_0^T
            k(\hat{X}^x_\tau) \id\tau\right) - U(x)\right|^2} \id x\\
        &= \inf_{v \in C([a,b]^d,\R)}\int_{[a,b]^d} \E{\left| \psi(\hat{X}^x_T)
            \exp\left(-\int_0^T k(\hat{X}^x_\tau) \id\tau\right) -
            v(x)\right|^2} \id x
    \end{align*}
    and such that for all $x \in [a,b]^d$ we have
    \begin{equation*}
        U(x)=\E{\psi(\hat{X}^x_T)\exp\left(-\int_0^T k(\hat{X}^x_\tau)
        \id\tau\right)}.
    \end{equation*}
    Now, for all $V\in C([a,b]^d,\R)$ we have that the map
    \begin{equation*}
        C([0,T],\R^d) \ni \gamma \mapsto 
        \left|\psi(\gamma_T)\exp\left(-\int_0^T k(\gamma_\tau)\id\tau\right)-
        V(\gamma_0)\right|^2 \in \R     
    \end{equation*}
    is at most polynomially growing. Thus \cite[Lemma 2.6]{2018arXiv180600421B}
    implies that for all $V\in C([a,b]^d,\R)$ we have that
    \begin{equation*}
        \begin{aligned}
        &\E{ \left| \psi(\mathbb{X}_T)\exp\left(-\int_0^T k(\mathbb{X}_\tau)\id
         \tau\right) - V(\xi) \right|^2}\\
        &=
        \frac{1}{(b-a)^d} \int_{[a,b]^d} 
        \E{ \left| \psi(X_T^x)\exp\left(-\int_0^T k(X_\tau^x)\id
        \tau\right) - V(x) \right|^2} \id x.
        \end{aligned}
    \end{equation*}
    Then, for all $V\in C([a,b]^d,\R)$ with the property that
    \begin{equation*}
        \begin{aligned}
        &\E{ \left| \psi(\mathbb{X}_T)\exp\left(-\int_0^T k(\mathbb{X}_\tau)\id
         \tau\right) - V(\xi) \right|^2}\\
        &=
        \inf_{v\in C([a,b]^d,\R)}
        \E{ \left| \psi(\mathbb{X}_T)\exp\left(-\int_0^T k(\mathbb{X}_\tau)\id
        \tau\right) - v(\xi) \right|^2},
        \end{aligned}
    \end{equation*}
    a direct calculation shows that
    \begin{equation*}
        \begin{aligned}
        &\int_{[a,b]^d} \E{ \left| \psi({X}_T^x)
        \exp\left(-\int_0^T k({X}^x_\tau)\id
         \tau\right) - V(x) \right|^2} \id x\\
        &=
        \inf_{v\in C([a,b]^d,\R)}
        \int_{[a,b]^d}
        \E{ \left| \psi(X_T^x)\exp\left(-\int_0^T k(X_\tau^x)\id
        \tau\right) - v(\xi) \right|^2} \id x.
        \end{aligned}
    \end{equation*}
    Hence also this minimiser is unique and equals $U$ and finally
    \begin{equation*}
        \begin{aligned}
        &\E{ \left| \psi(\mathbb{X}_T)\exp\left(-\int_0^T k(\mathbb{X}_\tau)\id
         \tau\right) - U(\xi) \right|^2}\\
        &=
        \inf_{v\in C([a,b]^d,\R)}
        \E{ \left| \psi(\mathbb{X}_T)\exp\left(-\int_0^T k(\mathbb{X}_\tau)\id
        \tau\right) - v(\xi) \right|^2}.
        \end{aligned}
    \end{equation*}
    This together with the Feynman-Kac formula proves the result.
\end{proof}

Proposition \ref{thm:minimizer} guarantees that we have a feasible minimisation
problem to approximate by the learning algorithm.

In the following section we will describe the machine learning algorithm used to approximate the PDE using the above optimisation representation. 
Furthermore, we derive the Monte-Carlo method used to approximate the normalisation constant in the correction step.
We thus specify our full method.

\section{Splitting method for the neural network representation of the posterior}\label{sec:DL_method}
Here, we introduce some of the terminology specific to the field of neural networks.
For an in-depth discussion on deep learning terminology, algorithms and applications, we refer the reader to the book \cite{goodfellow2016deep}. Thereafter, we specify explicitly the learning algorithm employed in our method. Subsequently, we derive the Monte-Carlo method used in the correction step of the splitting method and the section ends with a full description of our algorithm in pseudocode.

\subsection{Neural network model for prediction step}\label{sec:NN_model}

\begin{defi}
    Given $L \in \mathbb{N}$ and $(l_0,\dots,l_L) \in \mathbb{N}^{L+1}$ and a
    continuous function $\rho \in C^0(\R)$ a (feed-forward fully-connected) neural network $\mathcal{NN}$
    is a function $\R^{l_0} \rightarrow \R^{l_L}$ given by
    \begin{equation*}
        \mathcal{NN}(x) = \left( \bigcirc_{i=1}^L \rho \odot
            \mathcal{A}_i^{(l_{i-1}, l_i)} \right) (x),
    \end{equation*}
    where the $\mathcal{A}_i^{(l_i-1, l_i)}$ are affine maps
    $\R^{l_{i-1}} \rightarrow \R^{l_i}$ of the form
    $x \mapsto A_i x + b_i$, $A_i\in\R^{l_{i-1}\times l_i}$, $b_i \in \R^{l_i}$.
\end{defi}

The number $L$ is called the \emph{depth} of the network,
the function $\rho$ is called the \emph{activation function},
and the matrices and vectors $A_i$ and $b_i$ are called the \emph{weights}
and \emph{biases} of the $i$-th \emph{hidden layer}, respectively. In the experimental part of this work, we consider
the activation function $\rho(x)=\tanh(x)$. Other common choices include the ReLu activation function $\operatorname{ReLu}(x) = \max\{0,x\}$ or the sigmoidal function $\sigma(x)=1/(1+\exp(-x))$, among many others.
Collectively, the parameters of the function represented by the neural network are denoted
\begin{equation*}
\theta = \{ \{A_i^{jk}\}_{jk}, \{b_i^{j}\}_j \colon i=1,\dots, L\} \subset \R^{(\sum_{i=2}^L l_{i-1}l_i+l_i)}
\end{equation*}
and we sometimes write $\mathcal{NN}_\theta$ to note the dependence explicitly.
The symbol $\bigcirc$ denotes function composition and the symbol
$\odot$ denotes componentwise function composition, i.e. for any $\mathcal{A}\colon \R^{m} \rightarrow \R^{n}$ and $x\in\R^m$ we have
\begin{equation*}
    (\rho \odot \mathcal{A})(x) = (\rho((Ax)_1), \dots, \rho((Ax)_n))^\prime \in \R^n.
\end{equation*}

In general, the weights and biases of a neural network are to be chosen freely
and are commonly determined using an optimisation algorithm such as gradient descent, 
stochastic gradient descent~\cite{bottou-mlss-2004} or variants thereof,
such as AdaGrad~\cite{Adagrad_paper}, momentum methods~\cite{rumelhart86}, or the ADAM optimiser~\cite{kingma2014adam}.
Our method of choice in this work is the ADAM optimiser.
The optimisation procedure based on supplied \emph{training data} is in this context commonly referred
to as \emph{learning}. Notice, however, that there is an important distinction between learning and optimisation. While optimisation is concerned with the pure minimisation (or maximisation) of a target function, the goal of learning is to create a model that \emph{generalises} well, i.e. performs well on unseen inputs. Thus, in certain contexts it is undesirable to fit a model too closely to the provided training data, since this can degrade the out-of-sample performance, a phenomenon known as \emph{overfitting}.

Moreover, the initialisation of the parameters is a
crucial part of the performance of the optimisation and defines its own branch of research within machine learning.
Additionally, the neural network model has various free parameters that are
neither given by the original problem nor are they determined by the learning procedure.
These include the architecture of the network, i.e. the depth $L$, the layer widths $l_i$, or certain parameters in the optimisation algorithm such as the \emph{learning rate} (i.e. the step size
of the gradient descent method) or training batch size, and are commonly to be chosen heuristically or from experience. These are commonly called \emph{hyperparameters}.

Additionally, we employ the technique of batch-normalisation~\cite{ioffe2015batch} in our computations, but refrain here from a detailed discussion. The reader is referred to the original work~\cite{ioffe2015batch} or the book~\cite{goodfellow2016deep}.

In order to use a neural network model for the filtering problem, we employ the splitting-up
method to first split the problem into the solution of a deterministic Fokker-Planck PDE and the 
subsequent inclusion of the observation using the likelihood and normalisation procedure.

The PDE step is where we incorporate the deep learning method to solve the Fokker-Planck equation over a rectangular domain $\Omega_d = [\alpha_1, \beta_1]\times\dots\times[\alpha_d,\beta_d]$, for the sake of computational feasibility.
Its solution is reformulated into the optimisation problem over function space given in Proposition~\ref{thm:minimizer}. This optimisation problem is approximated by the optimisation
\begin{equation*}
    \inf_{\theta \in \R^{\sum_{i=2}^L l_{i-1}l_i+l_i}} \E{\left| \psi(\hat{\mathbb{X}}_T)
        \exp\left(-\int_0^T k(\hat{\mathbb{X}}_\tau) \id\tau\right) -
        \mathcal{NN}_\theta(\xi)\right|^2}
\end{equation*}
where the solution of the PDE is represented by a neural network and the infinite-dimensional function space has been parametrised by the neural network parameters $\theta$.
To this problem we will be able to apply a gradient descent method for the determination of the
parameters in the model to minimise the associated \emph{loss function}
\begin{equation*}
\mathcal{L}(\theta ; \{\xi^i, \{\hat{\mathbb{X}}_{\tau_j}^i\}_{j=0}^J \}_{i=1}^{N_b}) =
    \frac{1}{N_b} \sum_{i=1}^{N_b} \left| 
        \psi(\hat{\mathbb{X}}_T^i)
        \exp(- \sum_{j=0}^{J-1} k(\hat{\mathbb{X}}_{\tau_j}^i) (\tau_{j+1}-\tau_j)) -
        \mathcal{NN}_\theta(\xi^i)\right|^2,
\end{equation*}
where $N_b$ is the batch size and $ \{\xi^i, \{\hat{\mathbb{X}}_{\tau_j}^i\}_{j=0}^J \}_{i=1}^{N_b}$ is a training batch of independent identically distributed realisations $\xi^i$ of $\xi \sim \mathcal{U}(\Omega_d)$ and $\{\hat{\mathbb{X}}_{\tau_j}^i\}_{j=0}^J$ the approximate i.i.d.{} realisations of sample paths of the auxiliary diffusion started at $\xi^i$ over the time-grid $\tau_0=0<\tau_1<\cdots<\tau_{J-1}<\tau_J=T$. The sample paths are, for example, approximated using the Euler-Maruyama or a similiar SDE simulation method~\cite{KloedenPlaten1992}.
In practice, since the solution of the Fokker-Planck equation we seek is non-negative, we usually augment the loss $\mathcal{L}$ by an additional term to encourage positivity of the neural network and use
\begin{equation*}
    \tilde{\mathcal{L}}(\theta ;  \{\xi^i, \{\hat{\mathbb{X}}_{\tau_j}^i\}_{j=0}^J \}_{i=1}^{N_b}) =
    {\mathcal{L}}(\theta ;  \{\xi^i, \{\hat{\mathbb{X}}_{\tau_j}^i\}_{j=0}^J \}_{i=1}^{N_b}) 
    + \lambda \sum_{i=1}^{N_b} \max\{0,\mathcal{NN}_\theta(\xi^i)\}
\end{equation*}
with the hyperparameter $\lambda$ to be chosen.

Thus, in the notation of subsection~\ref{sec:F_eq}
we replace the Fokker-Planck solution by a neural network model,
i.e. we \emph{postulate} a neural network model
\begin{equation*}
    \tilde{p}_n(z) = \mathcal{NN}(z),
\end{equation*}
with support on $\Omega_d$. Therefore we require the a priori chosen domain to capture most of the mass of the probability distribution it is approximating.

\subsection{Monte-Carlo correction step}\label{sec:mc_correction}
We then realise the correction step via Monte-Carlo sampling over the bounded rectangular domain $\Omega_d$ to approximate the integral
\begin{equation*}
     \int_{\R^d} \xi_n(z)\mathcal{NN}(z) \id z = \int_{\R^d} \exp \left( -\frac{t_n-t_{n-1}}{2} 
        ||z_n - h(z)||^2 \right) \mathcal{NN}(z) \id z,
\end{equation*}
where, as defined earlier, $z_n = \frac{1}{t_n-t_{n-1}}(Y_{t_n}-Y_{t_{n-1}})$. Now, since the neural network has $\operatorname{supp}(\mathcal{NN}) \subseteq \Omega_d$ this is equal to the integral
\begin{equation}\label{eq:MC_int}
    \int_{\Omega_d} \exp \left( -\frac{t_n-t_{n-1}}{2} 
        ||z_n - h(z)||^2 \right) \mathcal{NN}(z) \id z.
\end{equation}
In general, to achieve the approximation of the above integral via Monte-Carlo, one needs to be able to sample from an appropriate density. Moreover, see Remark~\ref{rem:mc_sampling} below for possible alternatives.
   
\begin{remark}\label{rem:mc_sampling}
The usage of the Monte-Carlo method to perform the normalisation 
is optional in our low-dimensional experimental setup below, where efficient quadrature methods are a 
good alternative.
However, we chose to design our algorithm around the sampling based method, as a
large part of the literature devoted to machine learning algorithms for PDEs aims to 
design grid-free (in space) methods to achieve better performance in high dimensions.
In that regard, we specify our algorithm so that it can be tested in higher dimensional, grid-free, settings without major alterations in subsequent studies.
\end{remark}

Since, in this work, we are considering mainly affine-linear sensor functions $h(x) = h_1 x + h_2$, we illustrate the Monte-Carlo integration method in this case.
Notice that the likelihood function then reads
\begin{align*}
    \xi_n(z) &= \exp \left( -\frac{t_n-t_{n-1}}{2} 
        (z_n - h_1 z - h_2)^2 \right) \\
    &= \exp \left( -\frac{(t_n-t_{n-1})h_1^2}{2} 
        \left(\frac{z_n - h_2}{h_1} - z \right)^2 \right)\\
    &= \exp \left( -\frac{1}{2} 
        \left( \frac{ \frac{z_n - h_2}{h_1} - z }{((t_n-t_{n-1})h_1^2)^{-1/2}} \right)^2 \right)\\
    &= \frac{\sqrt{2\pi}}{\sqrt{(t_n-t_{n-1})h_1^2}} \mathcal{N}_{\text{pdf}}\left(\frac{z_n - h_2}{h_1}, \frac{1}{(t_n-t_{n-1})h_1^2}\right)(z),
\end{align*}
where $\mathcal{N}_{\text{pdf}}(\mu,\sigma^2)$ denotes the probability density function of a normal distribution with mean $\mu$ and variance $\sigma^2$. Therefore, we can write the integral \eqref{eq:MC_int} as
\begin{equation*}
    \frac{\sqrt{2\pi}}{\sqrt{(t_n-t_{n-1})h_1^2}} \mathbb{E}_{Z}[\mathcal{NN}(Z)]; \qquad  \qquad Z \sim \mathcal{N}\left(\frac{z_n - h_2}{h_1}, \frac{1}{(t_n-t_{n-1})h_1^2}\right).
\end{equation*}
As it is straightforward to numerically sample from a Gaussian distribution, the Monte-Carlo
approximation derived above is implementable so that we can compute the normalisation constant $C_n$ numerically. Thus, we can explicitly represent the approximate posterior density
\begin{equation*}
        p^n(z) = \frac{1}{C_n} \xi_n(z) \tilde{p}^n(z),
\end{equation*}
and use it as the initial condition for the next time iteration. Therefore, our scheme is fully recursive and can be applied sequentially.

\begin{remark}
    Additional techniques to adjust the support of the approximation are needed when iterating
    the scheme over a long time duration/many steps as, eventually, in many common filtering setups, it will be the case that the mass of the posterior moves outside the initial domain. 
    The way to mitigate this problem depends, in general, on the specific filtering model under consideration and will be subject of further investigation.
\end{remark}

\subsection{Algorithm summary}\label{sec:algorithm}
We briefly summarise our full approximation method. In Algorithm~\ref{alg:splitting} we present the pseudocode for the splitting method as we apply it to the filtering equation. The method is designed to be fully grid-free in space, for the reasons outlined above in Remark~\ref{rem:mc_sampling}. 
Furthermore, a main feature of our algorithm is the ability to iterate it over successive time steps so that observations may arrive sequentially, and there is no strict requirement for them to be available beforehand. 
This is an especially important property in real-world filtering scenarios where observations are typically processed \emph{online}. Therefore, Algorithm~\ref{alg:splitting} is formulated as an iterative procedure over the observation time-grid $0=t_0, t_1,\dots, t_N=T$.

\begin{algorithm}[t]
    \caption{Splitting method for neural network representation of posterior}
    \label{alg:splitting}
    \begin{algorithmic}[1]
        \Require Time-grid $0=t_0, t_1,\dots, t_N=T$
        \Require Initial density $p_0$ 
        \Require Observations $Y_0,\dots, Y_N$
        \Require Affine-linear sensor function $h(x)=h_1 x+ h_2$
        \Function{Posterior$_0$}{$x$}
            \State\Return $p_0(x)$
        \EndFunction
        \For{$n$ from $1$ to $N$}
            \State Initialize $\mathcal{NN}^n_{\theta_{init}}$
            \State $\mathcal{NN}^n_{\theta_{trained}} \gets$\textsc{TrainNet}($\mathcal{NN}^n_{\theta_{init}}$, \textsc{Posterior}$_{n-1}$)
            \State Compute $z_n = \frac{1}{t_n-t_{n-1}}(Y_{t_n}-Y_{t_{n-1}})$
            \State Draw $N_{samples}$ samples $Z_j$ from $\mathcal{N}\left(\frac{z_n - h_2}{h_1},\frac{1}{(t_n-t_{n-1})h_1^2}\right)$
            \State Compute $C_n = \frac{1}{N_{samples}}\sum_{j=1}^{N_{samples}} \mathcal{NN}^n_{\theta_{trained}}(Z_j)$
            \Function{Posterior$_n$}{$x$}
                \State\Return $\frac{1}{C_n} \exp \left( -\frac{t_n-t_{n-1}}{2} 
        (z_n - h(x))^2 \right) \mathcal{NN}^n_{\theta_{trained}}(x)$
            \EndFunction
        \EndFor
    \end{algorithmic}
\end{algorithm}

Algorithm~\ref{alg:splitting} includes a network training step which we clarify in the pseudocode presented in Algorithm~\ref{alg:training}. Note that we give here, in the interest of clarity, a simplified version of the actual gradient-descent method that we employ in the numerical studies in section~\ref{sec:num_res}. However, the general rationale behind both methods is the same gradient-descent based process. The important parameters of the learning method are the number of training steps, usually called \emph{epochs}, $N_{epochs}$, the training batch size $N_b$ as well as the learning rate $\kappa$ that determines the step size of the gradient descent step to adjust the parameters of the neural network. In our studies below, we chose an adaptive learning rate based on a \emph{learning rate schedule}. That is, we choose a set of integers $0=K_0 < K_1 < \dots < K_M < K_{M+1}=\infty$ as cut-off steps, and a set of learning rates $\kappa_0,\dots,\kappa_M$ and adjust the learning rate during the training procedure according to
\begin{equation*}
    \kappa(n) = \sum_{i=0}^{M} \kappa_i \mathbf{1}_{[K_i, K_{i+1})}(n), \qquad n=1,\dots, N_{epochs}.
\end{equation*}
Since the training method is based on $N_b$ samples of the auxiliary diffusion in each epoch,
the full training uses $N_b N_{epochs}$ independent Monte-Carlo samples in total.

\begin{algorithm}[b]
    \caption{Network training (simplified)}
    \label{alg:training}
    \begin{algorithmic}[1]
    \Require $N_{epochs}$, $N_{b}$, $\kappa$
        \Function{TrainNet}{$\mathcal{NN}_{\theta_{init}}$, \textsc{Posterior}$_0$}
            \State $\theta\gets \theta_{init}$
            \For{$n$ from $1$ to $N_{epochs}$}
                \State Draw $N_{b}$ samples $\{\xi^i, \{\hat{\mathbb{X}}_{\tau_j}^i\}_{j=0}^J\}$
                \State Compute $\nabla_{\theta}\tilde{\mathcal{L}}(\theta ;  \{\xi^i, \{\hat{\mathbb{X}}_{\tau_j}^i\}_{j=0}^J \}_{i=1}^{N_b}) $
                \State $\theta \gets \theta - \kappa \nabla_{\theta}\tilde{\mathcal{L}}(\theta ;  \{\xi^i, \{\hat{\mathbb{X}}_{\tau_j}^i\}_{j=0}^J \}_{i=1}^{N_b})$ \Comment{Gradient descent}
            \EndFor
            \State $\theta_{trained}\gets \theta$
            \State\Return $\mathcal{NN}_{\theta_{trained}}$
        \EndFunction
    \end{algorithmic}
\end{algorithm}

Figure~\ref{fig:architecture} illustrates the neural network architecture that we are using in the numerical experiments exhibited in section~\ref{sec:num_res}. This architecture is inspired by the one used in previous experiments by other authors, for example in \cite{Han8505}.
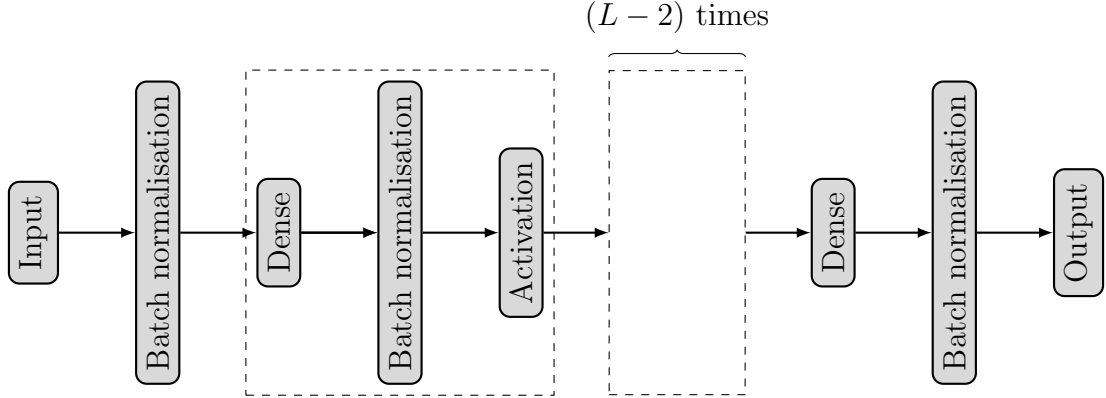
\begin{figure}
	\centering
	\begin{tikzpicture}[remember picture]
		\node[
			rotate=90,
			minimum width=1cm,
			inner xsep=5pt,
			fill=black!15,
			draw=black,
			thick,
			rounded corners,
			] (input) at (0,0)
			{
				Input
			};
			
		\node[
			right=of input.south,
			rotate=90,
			anchor=north,
			minimum width=1cm,
			inner xsep=5pt,
			fill=black!15,
			draw=black,
			thick,
			rounded corners,
			] (batchnorm1) at (input.south)
			{
				Batch normalisation
			};
			
		\node[
			right=of batchnorm1.south,
			rotate=90,
			anchor=north,
			minimum width=1cm,
			inner xsep=5pt,
			fill=black!15,
			draw=black,
			thick,
			rounded corners,
			] (dense1) at (batchnorm1.south)
			{
				Dense
			};
		\node[
			right=of dense1.south,
			rotate=90,
			anchor=north,
			minimum width=1cm,
			inner xsep=5pt,
			fill=black!15,
			draw=black,
			thick,
			rounded corners,
			] (batchnorm2) at (dense1.south)
			{
				Batch normalisation
			};
		\node[
			right=of batchnorm2.south,
			rotate=90,
			anchor=north,
			minimum width=1cm,
			inner xsep=5pt,
			fill=black!15,
			draw=black,
			thick,
			rounded corners,
			] (activation) at (batchnorm2.south)
			{
				Activation
			};		
		\node[draw=black, dashed, fit= (dense1) (batchnorm2) (activation.south)] (block) {};
		
		\node[
			right=of activation.south,
			rotate=90,
			anchor=north,
			minimum width=1cm,
			minimum height=1.5cm,
			inner xsep=5pt,
			text=cyan,
			] (blockl2times-dummy) at (activation.south)
			{
				Batch normalisation
			};
		\draw[fill=white,draw=white] (blockl2times-dummy.north west) rectangle (blockl2times-dummy.south east);
		\node[draw=black, dashed, fit= (blockl2times-dummy)] (blockl2times) {};
		\draw[
			decorate,
			decoration={brace},
			] ($(blockl2times.north west) + (0,5pt)$) --node[above=5pt]{$(L-2)$ times}  ($(blockl2times.north east) + (0,5pt)$);
		
		\node[
			right=of blockl2times-dummy.south,
			rotate=90,
			anchor=north,
			minimum width=1cm,
			inner xsep=5pt,
			fill=black!15,
			draw=black,
			thick,
			rounded corners,
			] (dense2) at (blockl2times-dummy.south)
			{
				Dense
			};
			
		\node[
			right=of dense2.south,
			rotate=90,
			anchor=north,
			minimum width=1cm,
			inner xsep=5pt,
			fill=black!15,
			draw=black,
			thick,
			rounded corners,
			] (batchnorm3) at (dense2.south)
			{
				Batch normalisation
			};
			
		\node[
			right=of batchnorm3.south,
			rotate=90,
			anchor=north,
			minimum width=1cm,
			inner xsep=5pt,
			fill=black!15,
			draw=black,
			thick,
			rounded corners,
			] (output) at (batchnorm3.south)
			{
				Output
			};
			
		\draw[thick, -latex] (input.south) -- (batchnorm1.north);
		\draw[thick, -latex] (batchnorm1.south) -- (dense1.north);
		\draw[thick, -latex] (dense1.south) -- (batchnorm2.north);
		\draw[thick, -latex] (batchnorm2.south) -- (activation.north);
		\draw[thick, -latex] (dense1.south) -- (batchnorm2.north);
		\draw[thick, -latex] (activation.south) -- (blockl2times.west);
		\draw[thick, -latex] (blockl2times.east) -- (dense2.north);
		\draw[thick, -latex] (dense2.south) -- (batchnorm3.north);
		\draw[thick, -latex] (batchnorm3.south) -- (output.north);
	\end{tikzpicture}
	\caption{Neural network architecture used in our experiments. We use the architecture similar to the one employed in \cite{Han8505}. The input is initially transformed by a batch-normalisation layer \cite{ioffe2015batch} and then a sequence of a triple (dashed box) consisting of an affine linear (Dense) transformation, a batch normalisation, and a subsequent application of the $tanh$-nonlinearity (Activation) is applied $L-1$ times, where $L$
	is the depth of the neural network. Before returning, another affine transformation (Dense) and then a final batch-normalisation are applied.}
	\label{fig:architecture}
\end{figure}

\section{Numerical results for the splitting scheme}\label{sec:num_res}
We implement Algorithm~\ref{alg:splitting} for Examples~\ref{ex:linear_filter} and~\ref{ex:benes-filter} using
Tensorflow~\cite{tensorflow2015-whitepaper}. 
For a practical guide on the implementation of deep learning algorithms, the reader may consult~\cite{geron2019hands}.

In all examples below, the neural network architecture is a feed-forward fully connected neural network with a one-dimensional input layer, two hidden layers with a layer width of 51 neurons each, and an output layer of dimension one. For the optimisation algorithm we chose the ADAM optimiser and performed the training over 6002 epochs with a batch size of 600 samples. Note that during our testing we found that the batch size had a crucial effect on the performance of our algorithm. If chosen too small, the training procedure we used failed to discover an acceptable set of parameters for the neural network. If chosen too large, we observed that the training was slowed down on our hardware.

\subsection{One-dimensional linear filter}\label{sec:1d-lin_res}
Here we present the numerical results for the one-dimensional linear filtering setting outlined in Example~\ref{ex:linear_filter}. We first present in section~\ref{sec:OUlin_num} a filter that does not move outside the domain, based on an Ornstein-Uhlenbeck signal process.  Next, we show the results obtained for the linear filter with a signal process that moves toward the domain boundary in section~\ref{sec:lin_num}.

\subsubsection{Linear filter, case 1: \textit{M} = --1, \textit{\texteta} = 0}\label{sec:OUlin_num}
We are considering a linear filter with an Ornstein-Uhlenbeck signal process using the set of parameters, corresponding to the notation in Example~\ref{ex:linear_filter}, given in Table~\ref{tab:1d_oulin}. Moreover, as the initial condition we chose a Gaussian density with mean $0.0$ and standard deviation $0.01$. We iterate our method over $60$ timesteps up to a final time of $0.6$.

\begin{table}[h!]
\centering
\begin{tabular}{ c c c c c c c c }
$x_0$ & $y_0$ & $M$ & $\eta$ & $\Sigma$ & $H$ & $\gamma$ & $\Delta t$   \\
\hline\\[-1em]
$0.0$ & $0.0$ & $-1.0$ & $0.0$ & $0.1$ & $90.0$ & $0.0$ & $0.01$\\
\end{tabular}
\caption{Parameters used in the numerical experiment for the one-dimensional linear filter, case 1.}
\label{tab:1d_oulin}
\end{table}

The results of our approximation method applied to the linear filter with Ornstein-Uhlenbeck signal are visualised in Figure~\ref{fig:1d_oulin_graphs}. The full evolution of the estimated posterior is shown in Figure~\ref{fig:1d_oulin_graphs}~(a). In particular, we see that the approximated solution stays within the considered spatial domain $[-0.5, 0.5]$. This feature will be important when we discuss the linear filter with drift below. Moreover, note that in correspondence with the theoretical expectations, the variance of the approximated posterior distribution initially increases and then stays constant, with an oscillating mean which is affected by the sequentially arriving observations. In Figure~\ref{fig:1d_oulin_graphs}~(b)-(d) we present three snapshots of the numerical solution obtained with our modified splitting scheme. In each of the three graphs, the yellow line shows the plot of the neural network over the observed domain, approximating the solution of the Fokker-Planck PDE with initial condition given by the posterior density obtained from the previous step. The blue-shaded line is a pointwise Monte-Carlo reference solution based on the Sobol sequence over the spatial domain. This is used as a visual guide to judge the quality of the shape the neural network represents. Note that this is \emph{not} the theoretical solution for the filtering problem, but a reference solution for the Fokker-Planck equation for the prior, based on the initial condition given by the previous estimate.
The black dashed line shows the plot of the normalised posterior using the method outlined above. Additionally, we plotted the mean and standard deviation of the exact solution to the considered filtering problem as three blue vertical lines, the higher one representing the theoretical mean and the lower ones the standard deviation. The position of the signal is plotted as a red inverted triangle and the position of the observation as a green triangle. Note that the observation may lie outside the domain and thus may not be present in the graph.

\begin{figure}[!ht]
  \centering
  \parbox{\figrasterwd}{
    \parbox{.55\figrasterwd}{%
      \subcaptionbox{}{\includegraphics[width=1.2\hsize]{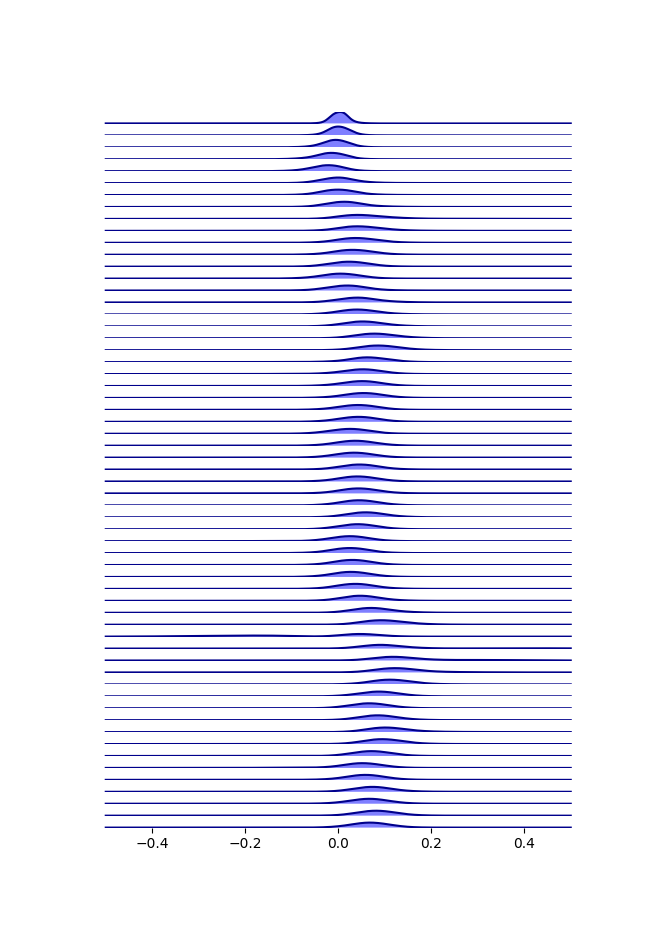}}
    }
    \hskip1em
    \parbox{.35\figrasterwd}{%
      \subcaptionbox{}{\includegraphics[width=\hsize]{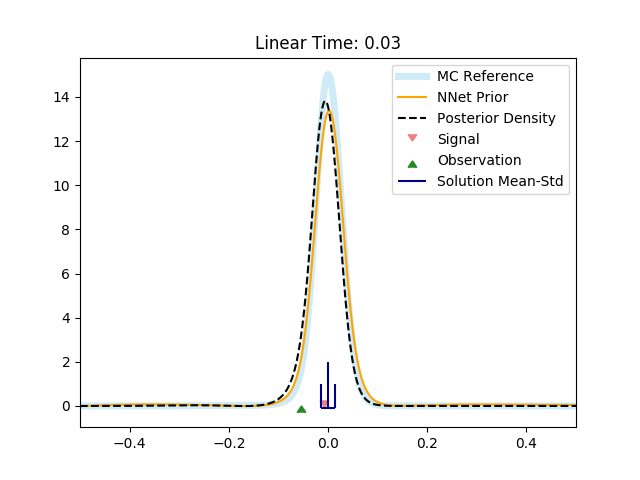}}
      \vskip1em
      \subcaptionbox{}{\includegraphics[width=\hsize]{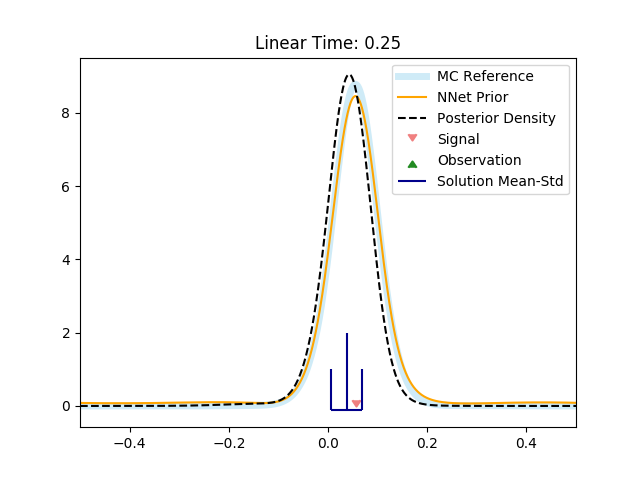}}
      \vskip1em
      \subcaptionbox{}{\includegraphics[width=\hsize]{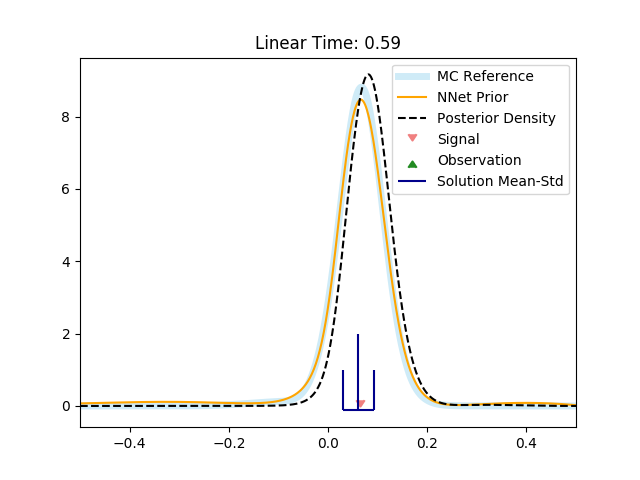}} 
    }
  }
  \caption{Results of the combined splitting-up/machine-learning approximation applied iteratively to the linear filtering problem, case 1. (a) The full evolution of the estimated posterior distribution produced by our method, plotted at all intermediate timesteps, from top to bottom. (b)-(d) Snapshots of the approximation at an early time, $t=0.03$, an intermediate time, $t=0.25$, and a late time, $t=0.59$, obtained after 3, 25 and 59 iterations of our method, respectively. The black dotted line in each graph shows the estimated posterior, the yellow line the prior estimate represented by the neural network, and the light-blue shaded line shows the Monte-Carlo reference solution for the Fokker-Planck equation.}
\label{fig:1d_oulin_graphs}
\end{figure}

The errors and diagnostics for the linear filter, case 1, are shown in Figure~\ref{fig:1d_oulin_graphserr}. Here, Figure~\ref{fig:1d_oulin_graphserr}~(a) is a graph of the absolute value of the difference between the mean of the approximate posterior and the theoretical posterior mean. We see that the error fluctuates about a constant value, which is the desired result. In particular, we do not expect a decreasing error but rather a stable one. This shows that the method is stable when iterated over many time steps. The two peaks at times $0.44$ and $0.46$ are explained below and due to a statistical outlier in the observation/likelihood.
Figure~\ref{fig:1d_oulin_graphserr}~(b) shows the training performance of the neural network approximation measured by the $L_2$-error over the domain between the Monte-Carlo reference solution of the Fokker-Planck PDE and the neural network representation across the training epochs. Each line in the graph represents a separate neural network, one for each timestep. Here we can see that the neural network training consistently converges to the Monte-Carlo solution across all time steps.
The probability mass of the neural net and Monte-Carlo reference solutions of the Fokker-Planck equation is plotted against time in Figure~\ref{fig:1d_oulin_graphserr}~(c), where we conclude that the machine learning approximation tends to slightly overestimate the mass of the solution.  
Lastly, in Figure~\ref{fig:1d_oulin_graphserr}~(d) we plot the acceptance rate of the Monte-Carlo integration of the neural network prior with respect to the likelihood as specified in our algorithm. A sample from the density in the likelihood is accepted if it lies within the considered domain, and rejected if it falls outside the domain. This is so because of the assumption that the neural network has support strictly within the domain. Here we can see that the quality of the likelihood is a major factor in the success of the method. The dip in the acceptance rate can be found to negatively affect the mass of the neural network prior (Figure~\ref{fig:1d_oulin_graphserr}~(c)) and finally results in a spike in the error (Figure~\ref{fig:1d_oulin_graphserr}~(a)). Furthermore, it is noteworthy that the method seems to recover from this event after the next two time steps which is a further hint at the stability of our method. 

\begin{figure}
     \centering
     \begin{subfigure}[b]{0.48\textwidth}
         \centering
         \includegraphics[width=\textwidth]{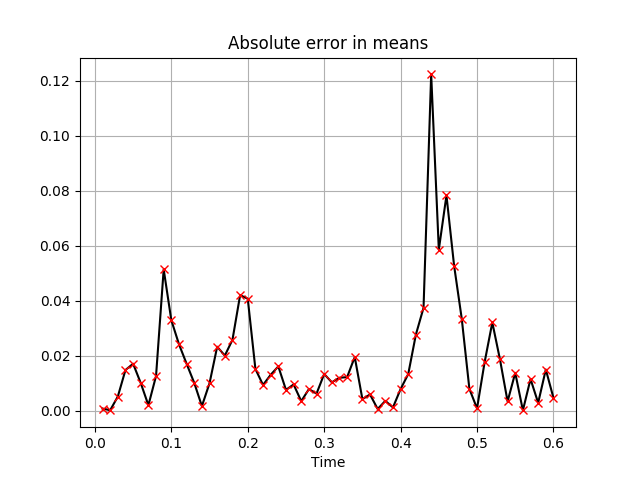}
         \caption{}
         \label{fig:y56367547srewfewrt x}
     \end{subfigure}
     \hfill
     \begin{subfigure}[b]{0.48\textwidth}
         \centering
         \includegraphics[width=\textwidth]{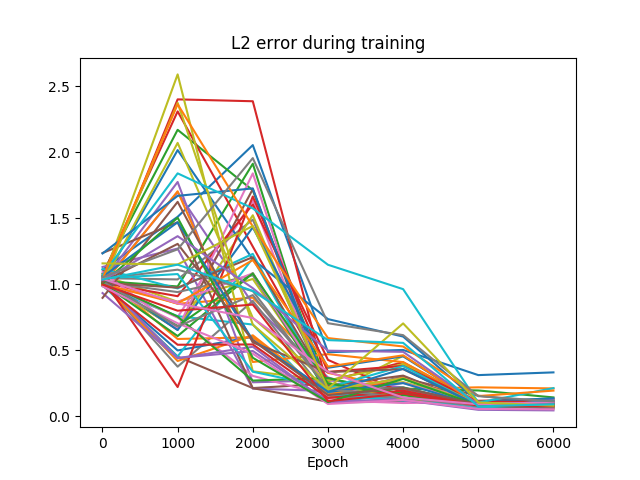}
         \caption{}
         \label{fig:5345633twefgrg}
     \end{subfigure}
     \newline
     \begin{subfigure}[b]{0.48\textwidth}
         \centering
         \includegraphics[width=\textwidth]{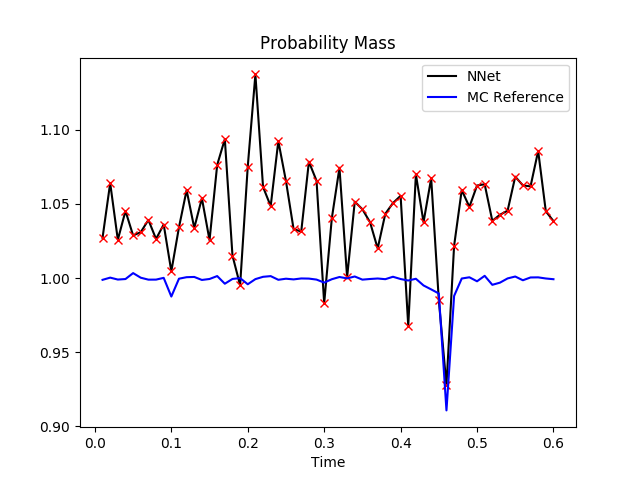}
         \caption{}
         \label{fig:5435346xtegxvwfrews}
     \end{subfigure}
     \hfill
     \begin{subfigure}[b]{0.48\textwidth}
         \centering
         \includegraphics[width=\textwidth]{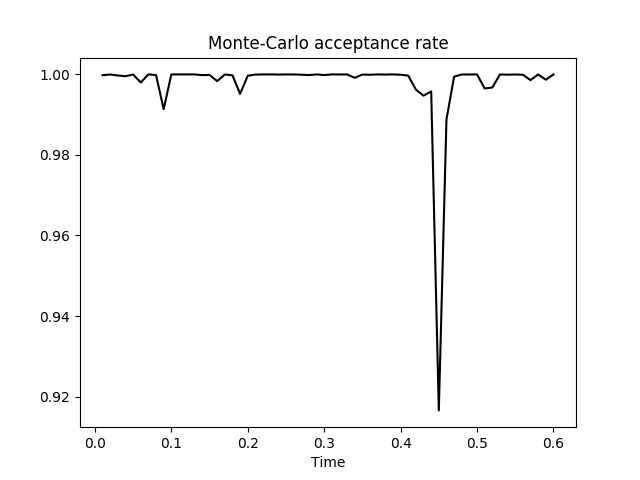}
         \caption{}
         \label{fig:t54643trvre42f}
     \end{subfigure}
        \caption{Error and diagnostics for linear filter, case 1. (a) Absolute error in means between the approximated distribution and the exact solution. (b) $L_2$ error of the neural network during training with respect to the Monte-Carlo reference solution. (c) Probability mass of the neural network prior. (d) Monte-Carlo acceptance rate.}
        \label{fig:1d_oulin_graphserr}
\end{figure}

\subsubsection{Linear filter, case 2: \textit{M} = 1, \textit{\texteta} = --1}\label{sec:lin_num} 
The second numerical study of this work is based on the Kalman filtering setting with the set of parameters given in Table~\ref{tab:1d_lin}. 

\begin{table}[ht]
\centering
\begin{tabular}{ c c c c c c c c }
$x_0$ & $y_0$ & $M$ & $\eta$ & $\Sigma$ & $H$ & $\gamma$ & $\Delta t$   \\
\hline\\[-1em]
$0.0$ & $0.0$ & $1.0$ & $-1.0$ & $0.1$ & $90.0$ & $0.0$ & $0.01$\\
\end{tabular}
\caption{Parameters used in the numerical experiment for the one-dimensional linear filter, case 2.}
\label{tab:1d_lin}
\end{table}

As the initial density we chose a Gaussian density with mean $0.0$ and standard deviation $0.01$. The domain over which we resolve the solution was chosen as the interval $[-0.8,0.4]$, in anticipation of the drift of the signal. We again iterated our method over $60$ time steps up to a final time of $0.6$. The results of the simulation are shown in Figure~\ref{fig:1d_lin_graphs}.

\begin{figure}[!ht]
  \centering
  \parbox{\figrasterwd}{
    \parbox{.55\figrasterwd}{%
      \subcaptionbox{}{\includegraphics[width=1.2\hsize]{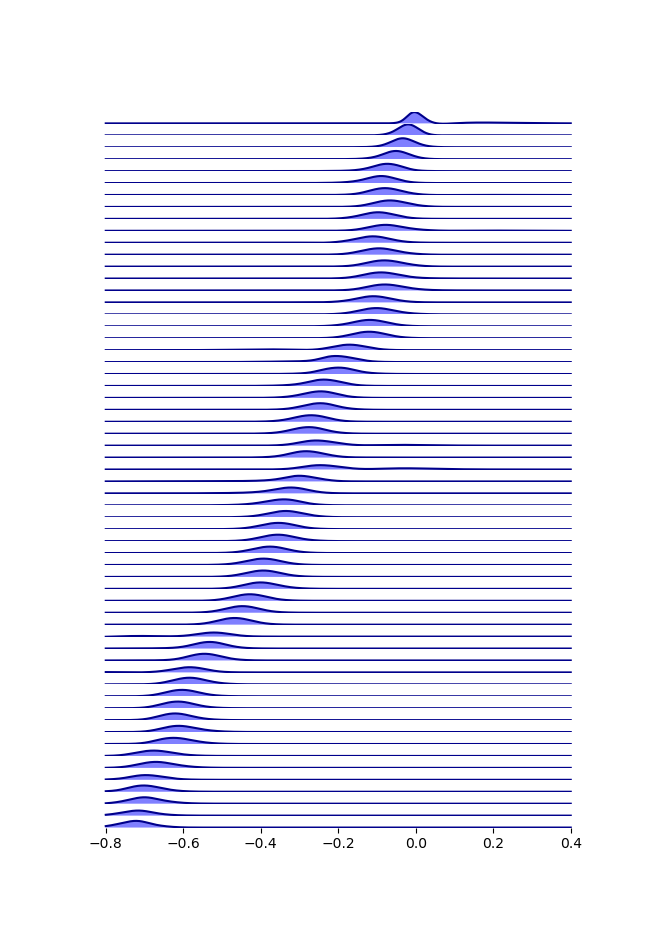}}
    }
    \hskip1em
    \parbox{.35\figrasterwd}{%
      \subcaptionbox{}{\includegraphics[width=\hsize]{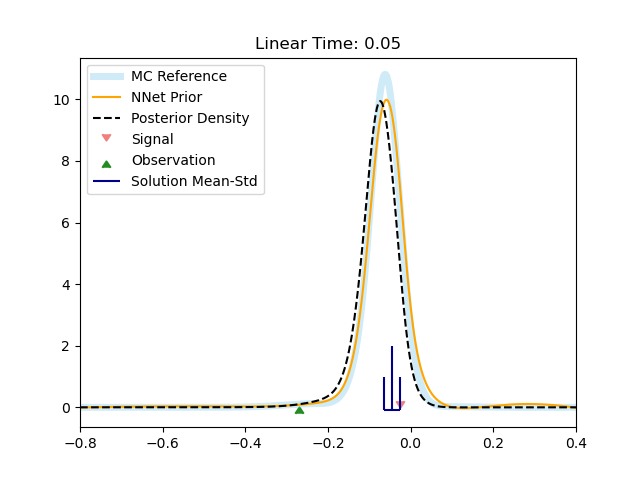}}
      \vskip1em
      \subcaptionbox{}{\includegraphics[width=\hsize]{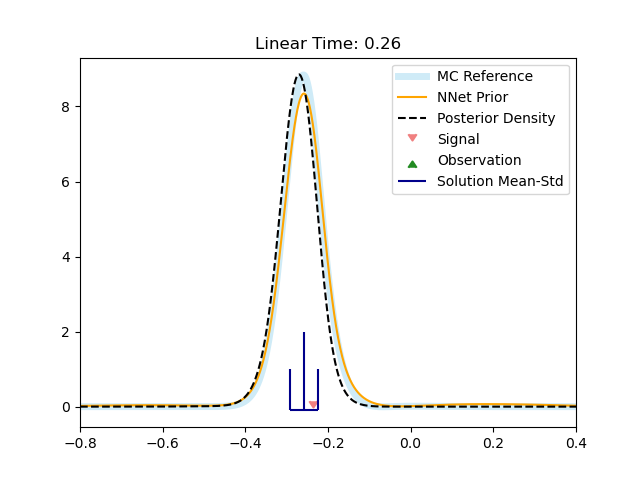}}
      \vskip1em
      \subcaptionbox{}{\includegraphics[width=\hsize]{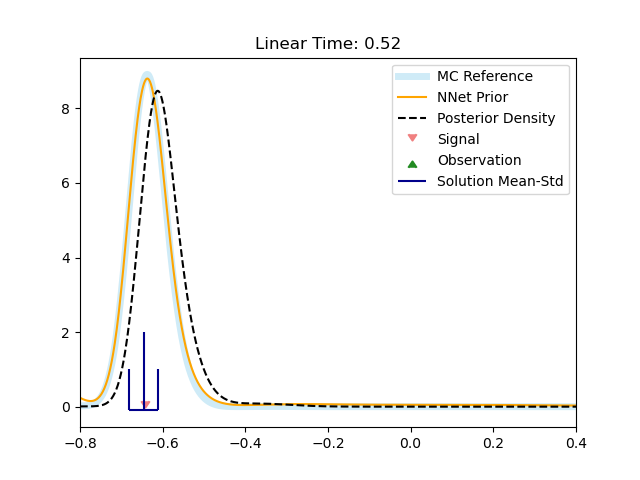}} 
    }
  }
  \caption{Results of the combined splitting-up/machine-learning approximation applied iteratively to the linear filtering problem, case 2. (a) The full evolution of the estimated posterior distribution produced by our method, plotted at all intermediate timesteps. (b)-(d) Snapshots of the approximation at an early time, $t=0.05$, an intermediate time, $t=0.26$, and a late time, $t=0.52$, obtained after 5, 26 and 52 iterations of our method, respectively. The black dotted line in each graph shows the estimated posterior, the yellow line the prior estimate represented by the neural network, and the light-blue shaded line shows the Monte-Carlo reference solution for the Fokker-Planck equation.}
\label{fig:1d_lin_graphs}
\end{figure}

As expected, the mean of the posterior moves to the left by approximately $0.01$ units of the domain at each time step. Furthermore, the standard deviation also initially increases as time progresses. 

\begin{figure}
     \centering
     \begin{subfigure}[b]{0.48\textwidth}
         \centering
         \includegraphics[width=\textwidth]{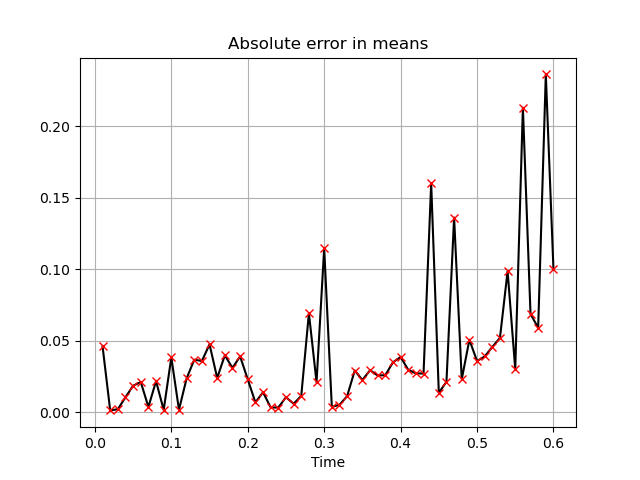}
         \caption{}
         \label{fig:y56367547srewrt x}
     \end{subfigure}
     \hfill
     \begin{subfigure}[b]{0.48\textwidth}
         \centering
         \includegraphics[width=\textwidth]{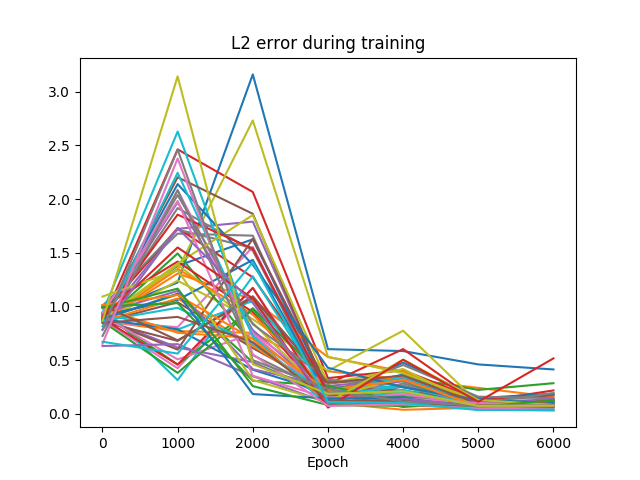}
         \caption{}
         \label{fig:5345633twerg}
     \end{subfigure}
     \newline
     \begin{subfigure}[b]{0.48\textwidth}
         \centering
         \includegraphics[width=\textwidth]{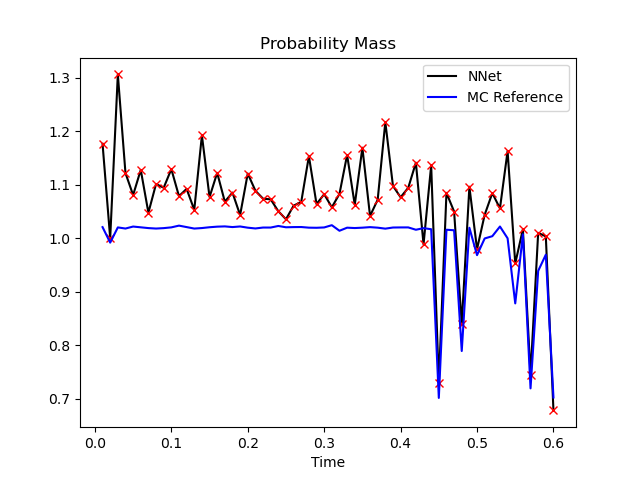}
         \caption{}
         \label{fig:5435346xtewfrews}
     \end{subfigure}
     \hfill
     \begin{subfigure}[b]{0.48\textwidth}
         \centering
         \includegraphics[width=\textwidth]{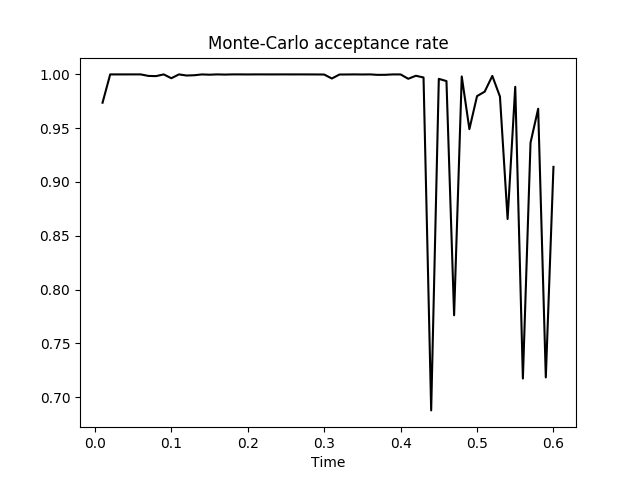}
         \caption{}
         \label{fig:t54643tr42f}
     \end{subfigure}
        \caption{Error and diagnostics for linear filter, case 2. (a) Absolute error in means between the approximated distribution and the exact solution. (b) $L_2$ error of the neural network during training with respect to the Monte-Carlo reference solution. (c) Probability mass of the neural network prior. (d) Monte-Carlo acceptance rate.}
        \label{fig:1d_lin_graphserr}
\end{figure}

In Figure~\ref{fig:1d_lin_graphserr} (a) we show the error between the means of the approximate posterior and the mean of the exact solution of the linear filter. Up to the time of about 0.44, we observe a steady oscillation within a range of 0.00-0.05, except for a few spikes which are classified as outliers. Thereafter, the error increases systematically. This phenomenon coincides with the observation in Figure~\ref{fig:1d_lin_graphserr} (c) where, after the time of about 0.44 the total mass of the network prior becomes unstable. Before this time, the neural network model has slightly overestimated the mass of the solution of the Fokker-Planck equation. Figure~\ref{fig:1d_lin_graphserr} (d) provides the interpretation for the cause of this phenomenon. It shows the Monte-Carlo acceptance rates for the integration method of the neural network prior with respect to the density given by the likelihood. The drop in acceptance rate shows that the samples from the likelihood increasingly lie outside the domain of the neural network prior, which depletes the quality of the approximation. Therefore, a strong likelihood within the domain we are considering is an important factor in the performance of our algorithm. This observations is also connected to the so-called signal-to-noise ratio which we need to be high in order to perform an accurate normalisation using the sampling method.
Finally, Figure~\ref{fig:1d_lin_graphserr} (b) is an illustration of the neural network training progress. Each line in the plot corresponds to a timestep, and shows the $L_2$ error against the training epoch with respect to the Monte-Carlo reference solution of the Fokker-Planck equation.

\subsection{One-dimensional Benes filter}\label{sec:1d-benes_res}
The third numerical study of this work is based on the nonlinear Benes filtering setting outlined in Example~\ref{ex:benes-filter}.
Here, we are considering the set of parameters, corresponding to the notation in Example~\ref{ex:benes-filter}, given in Table~\ref{tab:1d_ben}.

\begin{table}[ht]
\centering
\begin{tabular}{ c c c c c c c c }
$x_0$ & $y_0$ & $\alpha$ & $\beta$ & $\sigma$ & $h_1$ & $h_2$ & $\Delta t$   \\
\hline\\[-1em]
$0.0$ & $0.0$ & $3.0$ & $0.0$ & $0.5$ & $3.0$ & $0.0$ & $0.1$\\
\end{tabular}
\caption{Parameters used in the numerical experiment for the one-dimensional Benes filter.}
\label{tab:1d_ben}
\end{table}

The initial condition was again chosen to be the Gaussian density with mean $0.0$ and standard deviation $0.01$. This time, however, we chose a different, larger, time step in order to observe the characteristic bimodality appearing in the solution of the Benes filter. This also necessitated the choice of a larger domain for the neural net, which here was chosen to be the interval $[-4.0, 4.0]$. The results were calculated by iterating our scheme over $12$ timesteps for the approximation of the Benes filter and are plotted in Figure~\ref{fig:1d_ben_graphs}. The feature we like to stress in this nonlinear example is the development of the bimodal density that is resolved by our method, in particular in Figure~\ref{fig:1d_ben_graphs}~(c) and (d).

\begin{figure}[!ht]
  \centering
  \parbox{\figrasterwd}{
    \parbox{.55\figrasterwd}{%
      \subcaptionbox{}{\includegraphics[width=1.2\hsize]{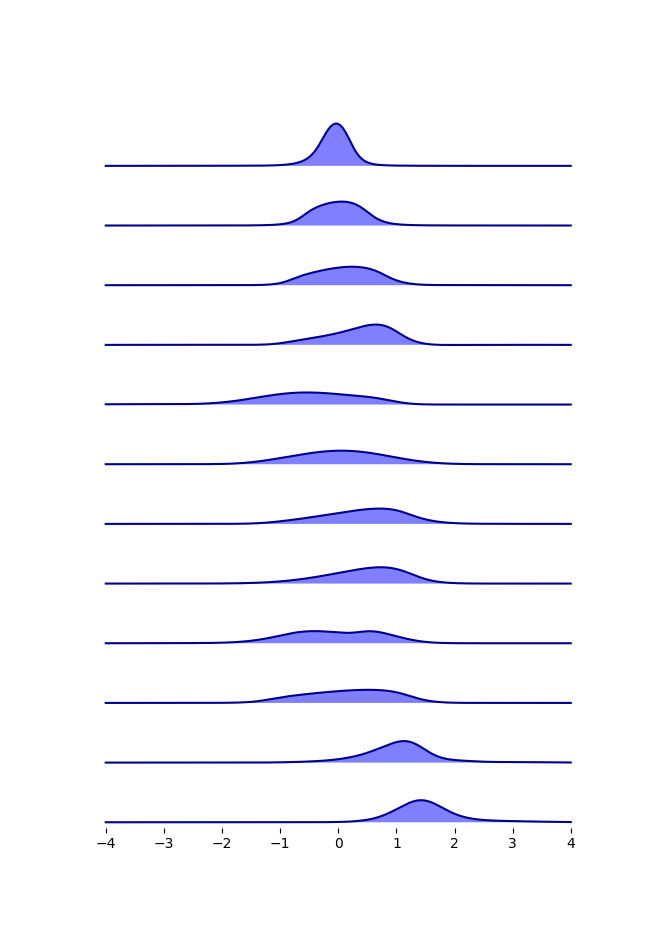}}
    }
    \hskip1em
    \parbox{.35\figrasterwd}{%
      \subcaptionbox{}{\includegraphics[width=\hsize]{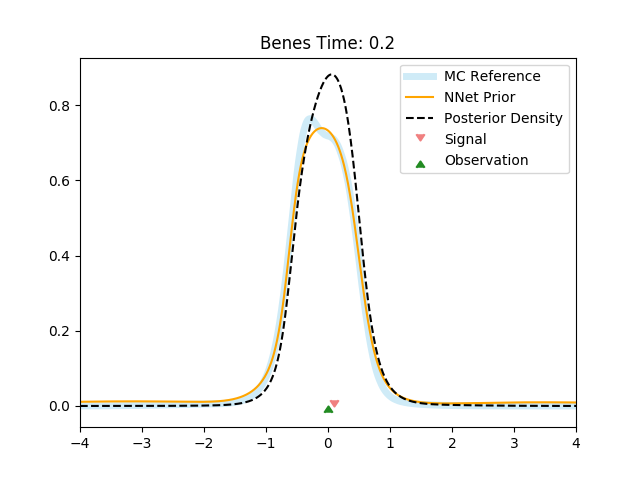}}
      \vskip1em
      \subcaptionbox{}{\includegraphics[width=\hsize]{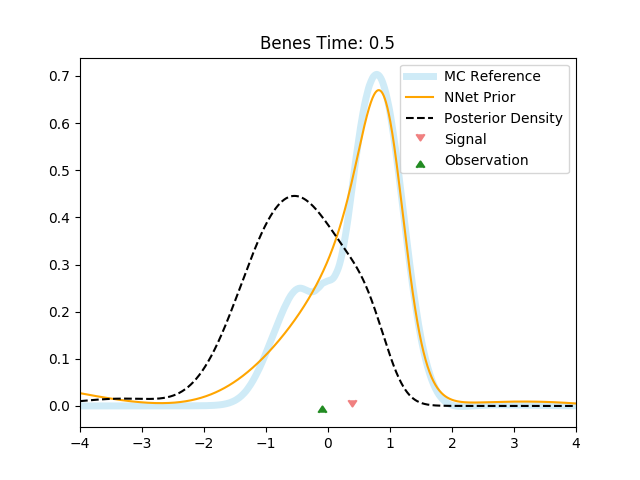}}
      \vskip1em
      \subcaptionbox{}{\includegraphics[width=\hsize]{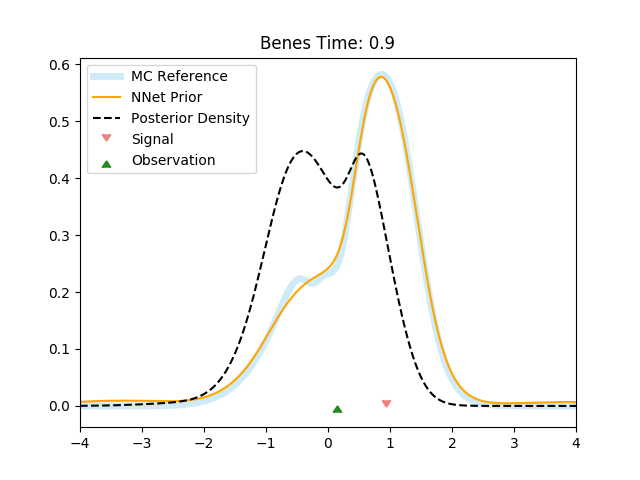}} 
    }
  }
  \caption{Results of the combined splitting-up/machine-learning approximation applied iteratively to the Benes filtering problem. (a) The full evolution of the estimated posterior distribution produced by our method, plotted at all intermediate timesteps. (b)-(d) Snapshots of the approximation at an early time, $t=0.2$, an intermediate time, $t=0.5$, and a late time, $t=0.9$, obtained after 2, 5 and 9 iterations of our method, respectively. The black dotted line in each graph shows the estimated posterior, the yellow line the prior estimate represented by the neural network, and the light-blue shaded line shows the Monte-Carlo reference solution for the Fokker-Planck equation.}
\label{fig:1d_ben_graphs}
\end{figure}

The error and diagnostic plots are shown in Figure~\ref{fig:1d_ben_graphserr}. The absolute error in Figure~\ref{fig:1d_ben_graphserr}~(a) shows a steady oscillation, and Figure~\ref{fig:1d_ben_graphserr}~(b) indicates that the neural network training converges to the Monte-Carlo reference solution across all time steps. Moreover, the probability mass plotted in Figure~\ref{fig:1d_ben_graphserr}~(c) oscillates around the correct value $1.0$ with a slight tendency to underestimate, also for the Monte-Carlo reference. The initially low mass is explained by the sharp drop of the peak of the initial Gaussian during the first timestep, which is difficult to capture. As observed in the linear cause though, the method seems to be able to recover from occasional inaccuracies. Figure~\ref{fig:1d_ben_graphserr}~(d) shows the Monte-Carlo acceptance rate for the correction step. The final drop is still acceptable, as the value of $\sim 93\%$ acceptance rate is still reasonable. These results demonstrate an ability of our algorithm to also track nonlinear problems over several timesteps.

\begin{figure}
     \centering
     \begin{subfigure}[b]{0.48\textwidth}
         \centering
         \includegraphics[width=\textwidth]{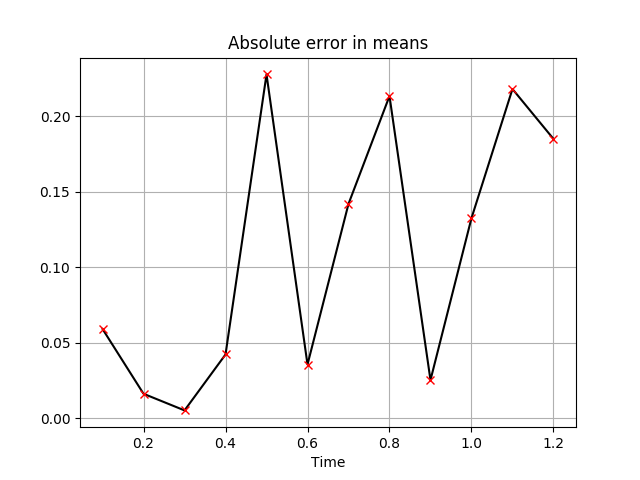}
         \caption{}
         \label{fig:y56367547srewgdsrt x}
     \end{subfigure}
     \hfill
     \begin{subfigure}[b]{0.48\textwidth}
         \centering
         \includegraphics[width=\textwidth]{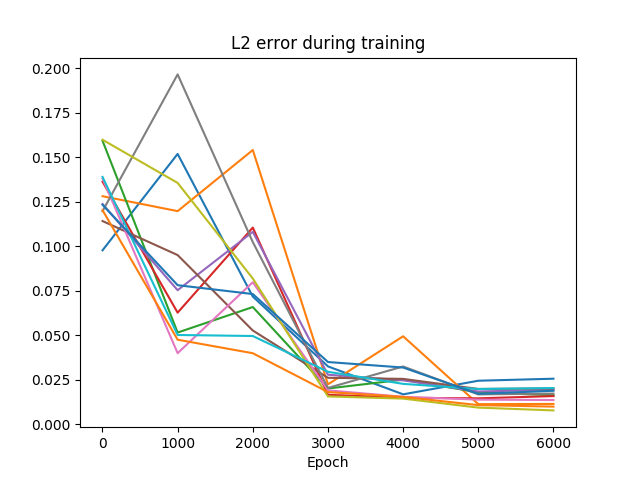}
         \caption{}
         \label{fig:5345633gdftwerg}
     \end{subfigure}
     \newline
     \begin{subfigure}[b]{0.48\textwidth}
         \centering
         \includegraphics[width=\textwidth]{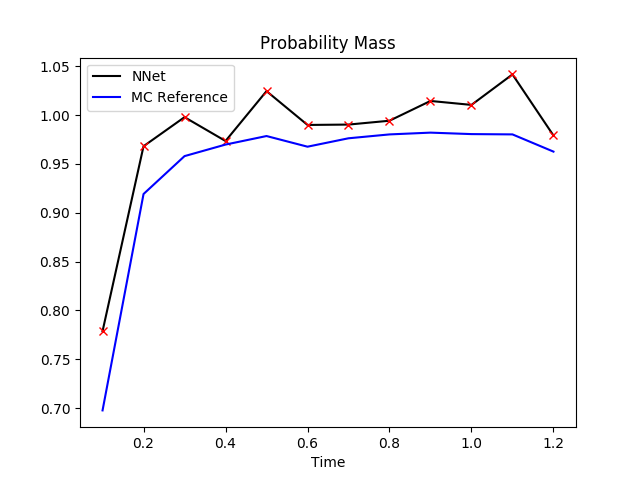}
         \caption{}
         \label{fig:5435346xhtyrtewfrews}
     \end{subfigure}
     \hfill
     \begin{subfigure}[b]{0.48\textwidth}
         \centering
         \includegraphics[width=\textwidth]{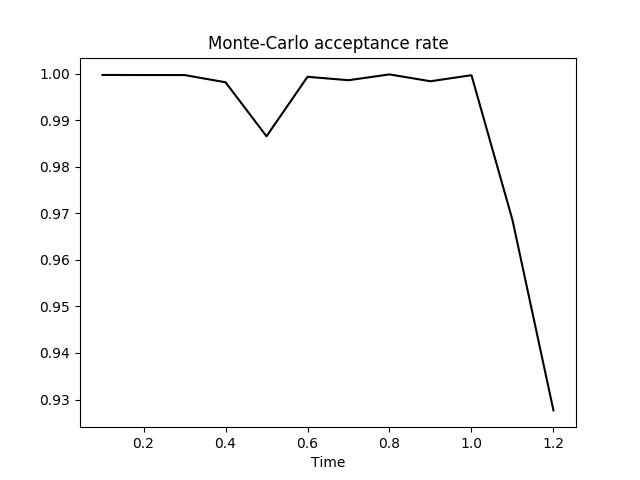}
         \caption{}
         \label{fig:t54643trhytrh42f}
     \end{subfigure}
        \caption{Error and diagnostics for the Benes filter. (a) Absolute error in means between the approximated distribution and the exact solution. (b) $L_2$ error of the neural network during training with respect to the Monte-Carlo reference solution. (c) Probability mass of the neural network prior. (d) Monte-Carlo acceptance rate.}
        \label{fig:1d_ben_graphserr}
\end{figure}

\section{Conclusion and outlook}\label{sec:conclusion}
As observed, an important factor in the success of our method lies in accurately determining the domain of resolution \emph{before} beginning the iterative procedure. As the mass of the density begins to move outside our observed window, the results will degrade quickly. A possible solution is to shift the observed window in a suitable manner at regular time intervals to obtain an adaptive method. Moreover, due to the Monte-Carlo sampling based correction step, which relies on samples from the likelihood, we need a high signal-to-noise ratio to maintain an accurate evaluation of the integral in the domain. If the acceptance rate of Monte-Carlo samples from the likelihood drops significantly, the results in our method deteriorate. This can be counteracted by sampling more points from the distribution. However, if the likelihood spread is too large, this will significantly slow down the algorithm.

We do not think that dealing with the domain boundaries is an unsurmountable problem. Future research will focus on investigating approaches to deal with the motion of the posterior outside of the domain.

Note further that, because the density of the optimal filter changes continuously in time, our algorithm is a natural candidate for \emph{transfer learning} the parameters of the neural net instead of retraining them from a random initialisation at every time step. Further details on the area of transfer learning can be found, for example, in~\cite[Chapter 15.2]{goodfellow2016deep}.

Although we found a similar performance of our method across a range of different hyperparameters such as the batch size, the network architecture, etc. the optimal choice of these for our given problem of filtering remains open.

A further direction of future study will be a detailed error analysis of the presented algorithm. This is a complex problem because the approximation performed here introduces inaccuracies at various stages. The first ones are the usual simulations of the signal and observation processes, as well as now also the auxiliary diffusion. Moreover, the machine learning algorithm introduces an error in estimating the Fokker-Planck PDE solution. Finally, the error due to the Monte-Carlo normalisation in the correction step must be analysed.

\subsubsection*{Conflict of Interest}
\vspace{-2mm}
 The authors declare that they have no conflict of interest.
\vspace{-2mm}

\subsubsection*{Data availability}
\vspace{-2mm}
 The document contains no data.
\setlength{\bibitemsep}{2.5 pt}

\printbibliography

\end{document}